
\documentclass[journal,onecolumn]{IEEEtran}

\IEEEoverridecommandlockouts                              

\usepackage{amsmath,amsgen,amstext,amssymb}
\usepackage{amsfonts,amsthm}
\usepackage{amsfonts}
\usepackage{dsfont}
\usepackage{mathrsfs}
\usepackage{latexsym}
\usepackage{times}
\usepackage{graphics}
\usepackage{graphicx}
\usepackage{epsfig}
\usepackage{psfrag}
\usepackage{bm}
\usepackage{lmodern}
\usepackage[T1]{fontenc}
\usepackage{url}
\usepackage{color}



\DeclareMathAlphabet{\mathpzc}{OT1}{pzc}{m}{it}

\newtheorem{theorem}{Theorem}[section]
\newtheorem{proposition}{Proposition}[section]
\newtheorem{lemma}{Lemma}[section]

\newtheorem{remark}{Remark}[section]

\newcommand{\bsigma}{\bm{\sigma}}

\newcommand{\blambda}{\bm{\lambda}}

\newcommand{\bnu}{\bm{\nu}}

\newcommand{\R}{\mathbb R}

\newcommand{\E}{\mathbb{E}}
\newcommand{\ex}{\mathbb{E}}
\newcommand{\pr}{\mathbb{P}}

\newcommand{\var}{\operatorname{Var}}
\newcommand{\norm}[1]{\| #1 \|}

\newcommand{\beqn}{\begin{equation}}
\newcommand{\eeqn}{\end{equation}}

\newcommand{\eqdef}{{:=}}

\providecommand{\norm}[1]{{\lVert#1\rVert}}

\newcommand{\zero}{{\mathbf{0}}}

\newcommand{\bA}{{\mathbf{A}}}

\newcommand{\bP}{{\mathbf{P}}}
\newcommand{\bQ}{{\mathbf{Q}}}

\newcommand{\bU}{{\mathbf{U}}}

\newcommand{\bc}{{\mathbf{c}}}
\newcommand{\be}{{\mathbf{e}}}

\newcommand{\Bm}{{\mathbf{m}}}

\newcommand{\bp}{{\mathbf{p}}}
\newcommand{\br}{{\mathbf{r}}}
\newcommand{\bu}{{\mathbf{u}}}
\newcommand{\bv}{{\mathbf{v}}}

\newcommand{\bx}{{\mathbf{x}}}
\newcommand{\bs}{{\mathbf{s}}}
\newcommand{\bS}{{\mathbf{S}}}
\newcommand{\by}{{\mathbf{y}}}

\newcommand{\g}{\lambda}                


\newcommand{\cF}{{\mathcal{F}}}

\newcommand{\cK}{{\mathcal{K}}}

\newcommand{\cL}{{\mathcal{L}}}

\newcommand{\cM}{{\mathcal{M}}}

\newcommand{\cP}{{\mathcal{P}}}
\newcommand{\cQ}{{\mathcal{Q}}}

\newcommand{\cS}{{\mathcal{S}}}







\newcommand{\IndSet}{\mathcal{I}}
\newcommand{\Int}{\mathbb{Z}}

\usepackage{soul}

\newcommand{\tdel}[1]{{}}
\newcommand{\exq}{\mathbb{E}_{\mathbf{Q}}}

\RequirePackage{amsmath}
\usepackage{amssymb}
\usepackage{algorithm}
\def\ba#1\ea{\begin{align*}#1\end{align*}}
\def\ban#1\ean{\begin{align}#1\end{align}}

\newcommand{\bE}{\ensuremath{\mathds{E}} }

\newcommand{\bvA}{\ensuremath{\overline{{\bf A}}} } 

\newcommand{\vlam}{\ensuremath{{\mbox{\boldmath{$\lambda$}}}} }
\newcommand{\vsig}{\ensuremath{{\mbox{\boldmath{$\sigma$}}}} }
\newcommand{\vnu}{\ensuremath{{\mbox{\boldmath{$\nu$}}}} }
\newcommand{\vzeta}{\ensuremath{{\mbox{\boldmath{$\zeta$}}}} }

\newcommand{\vx}{\ensuremath{{\bf x}} }
\newcommand{\vy}{\ensuremath{{\bf y}} }


\newcommand{\vone}{\ensuremath{{\bf 1}} } 


\newcommand{\lij}{\ensuremath{_{ij}} }
\newcommand{\se}{\ensuremath{^{(\epsilon)}} }
\newcommand{\numin}{\ensuremath{\nu_{\min}} }
\newcommand{\numinp}{\ensuremath{\nu_{\min}'} }

\newcommand{\calk}{{\cal K}}

\newcommand{\calf}{{\cal F}}

\title{\LARGE \bf
On Heavy-Traffic Optimal Scaling of \textit{c}-Weighted MaxWeight Scheduling in Input-Queued Switches
}

\author{Yingdong Lu, Siva Theja Maguluri, Mark S.\ Squillante, Tonghoon Suk
\thanks{Y.~Lu, M.S.~Squillante and T. ~Suk are with Mathematical Sciences, AI Science, IBM Research,
        {\tt\footnotesize \{yingdong, mss, tonghoon.suk\}@us.ibm.com};
	S.T.~Maguluri is with the H.\ Milton Stewart School of Industrial and Systems Engineering, Georgia Institute of Technology,
        {\tt\footnotesize siva.theja@gatech.edu}}%
}

\begin{document}

 \maketitle
\thispagestyle{empty}
\pagestyle{empty}

\begin{abstract}
We consider the asymptotically optimal control of input-queued switches under a cost-weighted variant of MaxWeight scheduling, for which
we establish theoretical properties that include showing the algorithm exhibits optimal heavy-traffic queue-length scaling.
Our results are expected to be of theoretical interest more broadly than input-queued switches.
\end{abstract}


\section{Introduction}
%
Data centers form the backbone of today's big data revolution.
The interchange of data within a data center is facilitated by 
huge $n\times n$ input queued switches (IQSs)
\cite{singh2015jupiter}.
Hence,
understanding scheduling problems in IQSs
is essential 
for real-world
data center networks \cite{perry2014fastpass}.
%
MaxWeight scheduling,
first introduced
for wireless networks
\cite{TasEph92}
and
then
for
IQSs
\cite{MCKEOWN96}, is well-known for being throughput optimal.
However, the issue of delay-optimal scheduling for switches is less clear.
MaxWeight
scheduling
has been shown to be asymptotically optimal in heavy traffic for an objective function of the summation of the squares of queue lengths (QLs)
under
complete resource pooling~\cite{stolyar2004}.
MaxWeight scheduling has also been shown to have optimal scaling in heavy traffic for an objective function of the summation of QLs
under
all
ports
saturated~\cite{maguluri2016},
which was
then extended to the case of incompletely saturated ports~\cite{maguluri2016b}.
Otherwise, the question of delay-optimal scheduling in
IQSs
remains open for general objective functions.

We seek
to gain fundamental insights on the delay-optimal properties of a generalized MaxWeight scheduling policy in $n\times n$
IQSs
in which a linear cost function of QL (delay) is associated with each queue.
Specifically, we extend the results in \cite{maguluri2016} to include per-queue costs and prove that a cost-weighted generalization of MaxWeight scheduling has optimal scaling in heavy traffic
for an objective function consisting of a
linear function of the steady-state average QLs.
Our results shed light on the delay optimality of MaxWeight scheduling and its variants more generally, including extensions to more general objective functions.
In addition, our results are expected to be of theoretical interest beyond
IQS
and related models as implied by our extension of the drift method,
first introduced in \cite{ErySri12} and together with its subsequent
developments.
This paper extends an earlier version~\cite{LuMaSq+18} to include a tighter universal lower bound (l.b.) on the average weighted QL and an explicit expression
for the weighted sum of QLs in heavy traffic in general $n\times n$
IQSs.

%
\S\ref{sec:model} presents our mathematical model and formulation, and
\S\ref{sec:MaxWeight} presents our analysis of cost-weighted MaxWeight scheduling,
followed by conclusions and some proofs.

\section{Model and Formulation}\label{sec:model}
Consider an
IQS
with $n$ input ports and $n$ output ports.
Each input port has a queue associated with every output port that stores packets waiting to be transmitted to the output port.
Let $(i,j) \in \IndSet \eqdef \{ (i,j) : i,j \in [n] \}$, $[n] \eqdef \{ 1, \ldots, n \}$, index the queue associated with the $i$th input port and the $j$th output port.
Let $c_{ij}>0$ denote the cost associated with queue $(i,j)$ and define $\bc := (c_{ij}) \in \R^{n^2}$.
Further define a new inner product on ${\mathbb R}^{n^2}$ with respect to (w.r.t.) the vector $\bc$ as follows
\begin{equation}
\label{eq:dot_product}
\langle x, y \rangle_\bc \; \eqdef \; \sum_{ij} c_{ij}x_{ij}y_{ij} .
\end{equation}
Hence, the corresponding norm of a vector $\bx \in \R^{n^2}$ is given by $||\bx||_{\bc}^2=\sum_{ij}c\lij x^2\lij $.

Packets arrive at queue $(i,j)$ from
a stochastic process.
Time is slotted
and denoted by
$t\in\Int_+\eqdef\{0,1,\ldots\}$.
At each time $t$, a scheduling policy selects a set of queues from which to simultaneously transmit packets under the constraints:
(1) At most one packet can be transmitted from an input port;
(2) At most one packet can be transmitted to an output port.
We refer to a \emph{schedule} as a subset of queues that satisfies these constraints.

A schedule is formally described by an $n^2$-dimensional binary vector $\bs=(s_{ij})_{(i,j)\in\IndSet}$ such that $s_{ij}=1$ if queue $(i,j)$ is in the schedule,
and $s_{ij}=0$ otherwise.
Let $\cP$ denote the set of all maximal schedules, i.e.,
\begin{equation*}
\cP  = \left\{
\big[ \bs\in\{0,1\}^{n^2} \big] \; : \; \begin{array}{l} \sum_{j\in[n]} s_{ij}= 1, \; \forall i\in [n]\\ \sum_{i\in[n]} s_{ij}= 1, \; \forall j\in [n] \end{array}
\right\} ,
\end{equation*}
and $\bS(t)\in \cP$ the schedule for period $t$ under the $\bc$-weighted MaxWeight scheduling algorithm defined below.
Let $Q_{ij}(t) \in \Int_+$ denote the length of the infinite capacity queue $(i,j)$ at time $t$ under this MaxWeight policy
and $A_{ij}(t) \in \Int_+$ the number of arrivals to queue $(i,j)$ during $[t,t+1)$.
The queueing dynamics then can be expressed as
\begin{equation}\label{eq:dynamics_of_Q-wU}
Q_{ij}(t+1) \; = \; Q_{ij}(t) + A_{ij}(t) - S_{ij}(t) + U_{ij}(t) ,
\end{equation}
where $U_{ij}(t) \in \Int_+$ denotes the unused service for queue $(i,j)$ at time $t$.
Any selected schedule is always a maximal schedule in $\cP$,
which results in an unused service at those queues with no packets to serve.
We assume that $\{ A_{ij}(t) : t \in \Int_+, \, (i,j) \in \IndSet \}$ are independent random variables (r.v.s)
and that, for each fixed $(i,j) \in \IndSet$, $\{ A_{ij}(t) : t \in \Int_+ \}$ are identically distributed with $\E[A_{ij}(t)] = \lambda_{ij}$.
Define $\bQ(t) \eqdef (Q_{ij}(t))_{(i,j)\in\IndSet}$, $\bA(t) \eqdef (A_{ij}(t))_{(i,j)\in\IndSet}$,
$\bS(t) \eqdef (S_{ij}(t))_{(i,j)\in\IndSet}$ and $\bU(t) \eqdef (U_{ij}(t))_{(i,j)\in\IndSet}$.

Consider the above
IQS
model under the $\bc$-weighted MaxWeight scheduling Algorithm~\ref{alg:maxwt}.
\begin{algorithm}
	\caption{\label{alg:maxwt} $\bc$-Weighted MaxWeight Scheduling}
	Let $\bc\in \R^{n^2}$ be a given positive weight (cost) vector, i.e., $c\lij > 0, \; \forall i,j$.
	Then, in every time slot $t$ under the $\bc$-weighted MaxWeight algorithm, each queue is assigned a weight $c_{ij}Q_{ij}(t)$ and a schedule with the maximum weight is chosen,
	namely
	\begin{align*}
	\bS(t) = \arg\max_{\bs \in \cP} \sum\lij c_{ij}Q_{ij}(t) s\lij =
	\arg \max_{\bs \in \cP}
	\langle \bQ(t),\bs  \rangle_\bc .
	\end{align*}
	Ties are broken uniformly at random.
\end{algorithm}

The objective function consists of minimizing a weighted summation of expected delay cost in steady state,
based on which we
establish
delay-optimal properties of the $\bc$-weighted MaxWeight scheduling algorithm.
Given the relationship between delays and QLs via Little's Law, we henceforth focus on cost as a function of the QLs.
Suppose the QL process $\bQ^\pi(t)$ under any
stationary policy $\pi$ converges in distribution to a steady state random vector $\overline{\bQ}^\pi$.
The objective function
can then be expressed as
\begin{equation*}
\min_{\pi \in \cM} \quad \E\bigg[ \sum_{(i,j)\in\IndSet} c_{ij}\overline{Q}^\pi_{ij} \bigg] ,
\end{equation*}
where $\cM$ denotes the set of all stationary policies.
Note that \cite{maguluri2016} considers the specific case of $c_{ij}=1, \forall i,j$.

\section{Heavy Traffic Analysis}
\label{sec:MaxWeight}
We study the switch system when the arrival rate vector $\vlam$ approaches a point on the boundary of the capacity region such that all the ports are saturated.
In other words, we consider the arrival rate vector approaching the face $\calf$ of the capacity region where
\begin{align*}
\calf
     &=\bigg\{\vlam\in {\mathbb R}_+^{n^2} : \langle \vlam, \be^{(i)}_{\bc} \rangle_{\bc} =1, \langle \vlam, {\tilde \be}^{(j)}_{\bc} \rangle_{\bc} =1, \; \forall i, j \in [n] \bigg\} ,
\end{align*}
and where $\be_{\bc}^{(i)}=\{\bx\in {\mathbb R}^{n^2}, x_{ij} = \frac{1}{c_{ij}}, x_{i'j}=0, \forall i'\neq i\}$ and
${\tilde \be}_{\bc}^{(j)}=\{\bx\in {\mathbb R}^{n^2}, x_{ij} = \frac{1}{c_{ij}}, x_{ij'}=0, \forall j'\neq j\}$.

We will obtain an exact expression for the heavy traffic scaled weighted sum of QLs under the $\bc$-weighted MaxWeight algorithm in heavy traffic,
along similar lines as \cite{maguluri2016b} but with the dot product redefined in \eqref{eq:dot_product} and related technical differences.
To obtain the desired result for heavy traffic performance under the $\bc$-weighted MaxWeight algorithm,
we first provide a universal l.b.\ on the average weighted QL.
We then establish that the QL vector concentrates close to a lower dimensional cone in the heavy traffic limit, which is called state space collapse.
Finally, we exploit this state space collapse result to obtain an exact expression for the heavy traffic scaled weighted sum of QLs in heavy traffic.
The proofs of the main results in \ref{sec:SSC} and \ref{sec:heavy} follow a similar logical approach to that in \cite{maguluri2016},
though with important technical differences and details for the \emph{$\bc$-weighted} MaxWeight algorithm due to the
modified dot product, norms and projections w.r.t.\ $\bc$.

Throughout, we consider a \emph{base family of switch systems} having arrival processes $\bA^{(\epsilon)}(t)$ parameterized by $0<\epsilon<1$
such that the mean arrival rate vector is given by $\blambda^{(\epsilon)}=\E[\bA^{(\epsilon)}(t)]=(1-\epsilon)\bnu$ for some
$\bnu$ in the relative interior of $\cF$
with $\numin\eqdef \min\lij \nu\lij >0$, and the arrival variance vector is given by $\var(\bA^{(\epsilon)})=(\bsigma^{(\epsilon)})^2<\infty$.

\subsection{Universal Lower Bound}
Consider a priority queueing system $\tilde{\bP}^{(\ell)}$ under a fixed priority ordering $\bp^{(\ell)} \in \cL$ among all $L=n!$ schedules
in the set $\cP$, indexed by $\ell$, where $\cL$ is the set of all possible priority orderings of the $L$ schedules.
Let $\tilde{Q}_{\ell,l}(t)$, $l=1, \ldots, L$, denote the QL process of the $l$th highest priority class in the system $\tilde{\bP}^{(\ell)}$
under ordering $\bp^{(\ell)}$.
Let $\Bm^{(\ell)}(l)$ be the set of queues $(i,j)$ of the switch contained within the $l$th priority class in $\bp^{(\ell)}$,
and $\tilde{A}_{\ell,l}(t)$ the composite arrival r.v.\ from all $A_{ij \in \Bm^{(\ell)}(l)}(t)$ that leads to the smallest QL among the queues
$(i,j)$ in the set $\Bm^{(\ell)}(l)$.
Then, for the system $\tilde{\bP}^{(\ell)}$, we can write an expression for the QL process of the highest priority class $1$ as
\begin{align*}
[\tilde{Q}_{\ell,1}(t+1)]^2-[\tilde{Q}_{\ell,1}(t)]^2 
= & [\tilde{Q}_{\ell,1}(t) + \tilde{A}_{\ell,1}(t+1) - 1 +\tilde{V}_{\ell,1}(t+1)]^2 -[\tilde{Q}_{\ell,1}(t)]^2 \\
=& [\tilde{Q}_{\ell,1}(t) + \tilde{A}_{\ell,1}(t+1) - 1]^2 -\tilde{V}_{\ell,1}^2(t+1) - \tilde{Q}_{\ell,1}^2(t)\\
=&  [\tilde{A}_{\ell,1}(t+1) - 1]^2 + 2 \tilde{Q}_{\ell,1}(t) [\tilde{A}_{\ell,1}(t+1) - 1] -\tilde{V}_{\ell,1}^2(t+1)
\end{align*}
where $\tilde{V}_{\ell,u}(t)$ denotes the time spent serving all lower priority classes $v>u$ and idling.
From the relationship
\begin{align*}
[\tilde{Q}_{\ell,1}(t) + \tilde{A}_{\ell,1}(t+1) - 1 +\tilde{V}_{\ell,1}(t+1)]\tilde{V}_{\ell,1}(t+1)=0 ,
\end{align*}
we therefore have
\begin{align*}
[\tilde{Q}_{\ell,1}(t) + \tilde{A}_{\ell,1}(t+1) - 1]\tilde{V}_{\ell,1}(t+1)=- \tilde{V}_{\ell,1}^2(t+1).
\end{align*}
Similarly, for the next highest priority class $2$, we obtain
\begin{align*}
[\tilde{Q}_{\ell,2}(t+1)]^2-[\tilde{Q}_{\ell,2}(t)]^2 &=  [\tilde{A}_{\ell,2}(t+1) - \tilde{V}_{\ell,1}(t+1)]^2 
+ 2 \tilde{Q}_{\ell,2}(t) [\tilde{A}_{\ell,1}(t+1) - \tilde{V}_{\ell,1}(t+1)] -\tilde{V}_{\ell,2}^2(t+1) ,
\end{align*}
thus rendering in stationarity
\begin{equation*}
\ex[(1-\tilde{A}_{\ell,1})^2] - 2 \ex[\tilde{Q}_{\ell,1}(1-\tilde{A}_{\ell,1})] -\ex[\tilde{V}_{\ell,1}^2]=0 .
\end{equation*}
Hence, $\tilde{Q}_{\ell,1}$ will be finite, and more specifically
\begin{align*}
\ex[\tilde{Q}_{\ell,1}] = \frac{\ex[(1-\tilde{A}_{\ell,1})^2]  -\ex[\tilde{V}_{\ell,1}^2]}{1-\ex[\tilde{A}_{\ell,1}]} \le \frac{\ex[(1-\tilde{A}_{\ell,1})^2] }{1-\ex[\tilde{A}_{\ell,1}]},
\end{align*}
which then yields for $\tilde{Q}_{\ell,2}$ in stationarity
\begin{align*}
2\ex[\tilde{Q}_{\ell,2}] \ex[ \tilde{V}_{\ell,1}-\tilde{A}_{\ell,2} ] \ge \ex[\tilde{A}_{\ell,2}^2] - 2\ex[\tilde{A}_{\ell,2}]\ex[\tilde{V}_{\ell,1}]- \ex[\tilde{V}_{\ell,2}^2].
\end{align*}
Continuing in this manner, we have in general for class $l$
\begin{align*}
\ex[\tilde{A}_{\ell,l} -\tilde{V}_{\ell,l-1}^2] - 2\epsilon \ex[\tilde{Q}_{\ell,l}] - \ex[\tilde{V}_{\ell,l}^2]=0 , \quad \forall l=2,\ldots,L .
\end{align*}
Upon expanding the first term, we obtain
\begin{equation}
\ex[\tilde{Q}_{\ell,l}] \ge \frac{\ex[\tilde{A}_{\ell,l}^2] - 2\ex[\tilde{A}_{\ell,l}]\ex[\tilde{V}_{\ell,l-1}]- \ex[\tilde{V}_{\ell,l}^2]}{2\epsilon} ,
\label{eq:l-queue-epsilon}
\end{equation}
which, since we know $\ex[\tilde{V}_{\ell,l}^2]=O(\epsilon)$, renders
\begin{equation}
\liminf_{\epsilon\downarrow 0} \epsilon \ex[\tilde{Q}_{\ell,l}] \ge \ex[\tilde{A}_{\ell,l}^2] - 2\ex[\tilde{A}_{\ell,l}]\ex[\tilde{V}_{\ell,l-1}].
\label{eq:l-queue}
\end{equation}
Let $\tilde{\cQ}_{\ell,l}^{(\epsilon)}$ and $\tilde{\cQ}_{\ell,l}$ be the RHS of \eqref{eq:l-queue-epsilon} and
\eqref{eq:l-queue}, respectively.
Define
$\hat{\cQ}_{\ell,ij}^{(\epsilon)} := \min_{l : ij \in \Bm^{(\ell)}(l)} \tilde{\cQ}_{\ell,l}^{(\epsilon)}$ and
$\hat{\cQ}_{\ell,ij}              := \min_{l : ij \in \Bm^{(\ell)}(l)} \tilde{\cQ}_{\ell,l}$.
We then have the desired universal l.b.
%
\begin{proposition} \label{prop:ULB}
Consider the base family of switches and fix a scheduling policy under which the system is stable for any $0<\epsilon<1$.
Suppose the QL process $\bQ^{(\epsilon)}(t)$ converges in distribution to a steady state random vector $\overline{\bQ}^{(\epsilon)}$,
and assume $(\bsigma^{(\epsilon)})^2\to\bsigma^2$.
Define
\begin{equation*}
\hat{\cQ}_{*}^{(\epsilon)} := \min_{\bp^{(\ell)} \in \cL} \sum_{ij} c_{ij} \hat{\cQ}_{\ell,ij}^{(\epsilon)} , \quad
\hat{\cQ}_{*}              := \min_{\bp^{(\ell)} \in \cL} \sum_{ij} c_{ij} \hat{\cQ}_{\ell,ij} .
\end{equation*}
Then, for each of these switch systems, the average weighted QL is lower bounded by
$\E[ \sum_{i,j}c_{ij}\overline{Q}_{ij}^{(\epsilon)} ] \geq \hat{\cQ}_{*}^{(\epsilon)}$,
and, in the heavy-traffic limit as $\epsilon\downarrow 0$, we have
\begin{equation}
\liminf_{\epsilon\downarrow 0} \epsilon \E\bigg[ \sum_{i,j}c_{ij}\overline{Q}_{ij}^{(\epsilon)} \bigg] \;\; \geq \;\; \hat{\cQ}_{*} .
\label{eq:prop:HT}
\end{equation}
\end{proposition}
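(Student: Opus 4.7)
The plan is to combine the stationary moment recursions for the priority queueing systems $\tilde{\bP}^{(\ell)}$, which the excerpt has already derived, with a sample-path coupling linking each $\tilde{\bP}^{(\ell)}$ to the original switch under an arbitrary stable scheduling policy. The centerpiece of the coupling is the pathwise comparison $\tilde{Q}_{\ell,l}(t) \leq \min_{(i,j) \in \Bm^{(\ell)}(l)} Q^{(\epsilon)}_{ij}(t)$, almost surely for every $t$, $\ell$, and $l$, obtained by driving both systems with jointly coupled arrivals. Two features keep this bound sustainable under induction on $t$: the composite arrival $\tilde{A}_{\ell,l}$ is, by construction, tied to whichever queue in $\Bm^{(\ell)}(l)$ currently attains the minimum, preventing arrivals from pushing $\tilde{Q}_{\ell,l}$ above the tracked minimum; and the priority system delivers class $l$ a full unit of service whenever higher-priority classes are empty, while the switch must divide its single matching per slot across many schedules, so the minimum over $\Bm^{(\ell)}(l)$ cannot decrease strictly faster than $\tilde{Q}_{\ell,l}$.

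With the coupling in hand, I would then solve the stationary moment equations in the excerpt recursively in $l$. The class-$1$ identity pins down both $\ex[\tilde{Q}_{\ell,1}]$ and an estimate for $\ex[\tilde{V}_{\ell,1}^2]$, which feeds into the class-$2$ identity, and so on, until \eqref{eq:l-queue-epsilon} delivers $\ex[\tilde{Q}_{\ell,l}] \geq \tilde{\cQ}_{\ell,l}^{(\epsilon)}$ for every $l$. Combined with the pathwise coupling, this gives $\ex[\overline{Q}^{(\epsilon)}_{ij}] \geq \tilde{\cQ}_{\ell,l}^{(\epsilon)}$ for every $l$ such that $(i,j) \in \Bm^{(\ell)}(l)$, so the definition of $\hat{\cQ}_{\ell,ij}^{(\epsilon)}$ immediately yields $\ex[\overline{Q}^{(\epsilon)}_{ij}] \geq \hat{\cQ}_{\ell,ij}^{(\epsilon)}$. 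Weighting by $c_{ij}$, summing over $(i,j)$, and observing that the resulting bound holds for every ordering $\ell \in \cL$ (so in particular for the worst of them), one arrives at $\ex[\sum_{ij} c_{ij} \overline{Q}^{(\epsilon)}_{ij}] \geq \hat{\cQ}_*^{(\epsilon)}$.

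For the heavy-traffic assertion \eqref{eq:prop:HT}, I would multiply through by $\epsilon$ and pass to $\liminf_{\epsilon \downarrow 0}$. The critical estimate $\ex[\tilde{V}_{\ell,l}^2] = O(\epsilon)$ holds because the effective load on each priority class is $1 - \Theta(\epsilon)$, forcing the stationary unused service to vanish at that order; combined with $(\bsigma^{(\epsilon)})^2 \to \bsigma^2$, this passes \eqref{eq:l-queue-epsilon} into its limiting form \eqref{eq:l-queue} and delivers \eqref{eq:prop:HT}. The main obstacle I anticipate is making the pathwise coupling fully rigorous, since the composite arrival $\tilde{A}_{\ell,l}$ is defined adaptively with respect to the evolving minimum, so one must verify inductively that the construction is well-posed and that the tracked minimum is respected under every possible switch schedule; a related subtlety is establishing the $O(\epsilon)$ control on $\ex[\tilde{V}_{\ell,l}^2]$ uniformly in $l$ and $\ell$, which typically requires a preliminary Lyapunov/drift bound on each priority system to rule out propagation of mass through the unused service across lower-priority classes.
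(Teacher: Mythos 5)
Your plan for the priority systems themselves---solving the stationary moment identities recursively in $l$, controlling $\ex[\tilde{V}_{\ell,l}^2]=O(\epsilon)$, and passing from \eqref{eq:l-queue-epsilon} to \eqref{eq:l-queue}---matches the development the paper carries out just before the proposition. The gap is in the bridge from those systems to an arbitrary stable switch policy. Your linchpin is the pathwise domination $\tilde{Q}_{\ell,l}(t)\le\min_{(i,j)\in\Bm^{(\ell)}(l)}Q^{(\epsilon)}_{ij}(t)$ for \emph{every} ordering $\ell$ and \emph{every} class $l$, and this cannot hold. The relevant comparison is not the per-slot rate of decrease (both sides decrease by at most one per slot) but the frequency of service: in $\tilde{\bP}^{(\ell)}$ a class $l\ge 2$ is served only in slots where all higher-priority classes are empty, whereas the switch may serve the queues of $\Bm^{(\ell)}(l)$ in every slot. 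Concretely, for $n=2$ with class $2$ being the schedule $\{(1,2),(2,1)\}$, under any stable policy there are times (with positive probability) at which $Q_{12}=Q_{21}=0$ while class $2$ of the priority system is nonempty because class $1$ has been hogging the server; the claimed inequality then fails, and so does the corresponding bound in expectation. Your justification that ``the switch must divide its single matching per slot across many schedules'' points in the wrong direction: it is the priority system, not the switch, that is service-starved at low priorities. Establishing the bound for every ordering componentwise is also far stronger than the proposition requires, which only involves $\min_{\bp^{(\ell)}\in\cL}$.

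The paper's proof avoids this entirely by an achievable-region (conservation-law) argument: the vector of mean class queue lengths of any work-conserving policy in the induced $L$-class system lies in a performance polytope whose \emph{vertices} are exactly the $L!$... rather, the $|\cL|$ strict-priority orderings, and a nonnegative linear objective over that polytope is minimized at a vertex. Hence the policy's weighted sum is lower bounded by the minimum over orderings---no claim is made that any single ordering dominates the policy componentwise, which is precisely the (false) statement your coupling tries to prove. The map back to the switch uses only that class $l$ in the relaxed system can be served whenever some queue of $\Bm^{(\ell)}(l)$ is nonempty, i.e., the priority systems constitute a relaxation of the switch's scheduling constraints. To repair your argument you would need to replace the per-class pathwise bound by the aggregate conservation inequalities $\sum_{l\in S}\ex[\tilde{Q}_{\ell,l}]\ge f(S)$ over subsets $S$ of classes (the polymatroid structure underlying the polytope), at which point you would essentially have reconstructed the paper's route. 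The heavy-traffic limiting step at the end of your proposal is fine once the finite-$\epsilon$ bound is in place.
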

\begin{proof}
The overall average QL $\sum_{l} \ex[\tilde{Q}_{\ell,l}]$ for each $L$-class priority queueing system $\tilde{\bP}^{(\ell)}$ under ordering $\bp^{(\ell)}$,
$\forall \bp^{(\ell)} \in \cL$, forms the vertices of the performance region polytope in which must lie the overall average QL of any scheduling policy in
the $L$-class queueing system.
Since, by construction, the $l$th queue under any $\bp^{(\ell)} \in \cL$ can be scheduled whenever at least one queue $(i,j)$ in $\Bm^{(\ell)}(l)$ has a packet,
this polytope together with $\hat{\cQ}_{\ell,ij}^{(\epsilon)}$ and $\hat{\cQ}_{\ell,ij}$ provide a l.b.\ on the overall average QL of any scheduling policy
in the original switch system.
It follows that the average weighted QL under any scheduling policy in the switch is lower bounded by $\hat{\cQ}_{*}^{(\epsilon)}$,
with the corresponding heavy-traffic limit lower bounded by $\hat{\cQ}_{*}$.
\end{proof}
\begin{remark}
The above l.b.\ \eqref{eq:prop:HT} improves upon the looser bound of $c_{\min}(\norm{\bsigma}^2/2)$ established in~\cite{LuMaSq+18}.
\end{remark}

\subsection{State Space Collapse}\label{sec:SSC}
Since \cite{maguluri2016} considers $c_{ij}=1, \forall i,j$, the state space collapse in our general case is to a different cone.
To establish this state space collapse result,
we first define the cone $\cK_{\bc}$ to be the cone spanned by the vectors $\be_{\bc}^{(i)}$ and ${\tilde \be_{\bc}}^{(j)}$, namely
\begin{align*}
\calk_{\bc} \eqdef \bigg\{ \vx\in {\mathbb R}^{n^2}: x_{ij} = \frac{w_i+{\tilde w}_j}{c_{ij}}, \quad w_i, {\tilde w}_j \in {\mathbb R}_+\bigg\}.
\end{align*}
For any $\vx\in {\mathbb R}^{n^2}$, define
$\vx_{\parallel \cK_{\bc}} \eqdef \arg\min_{\vy\in \calk_{\bc} } ||\vx-\vy||_{\bc}$
to be the projection of $\vx$ onto the cone $\cK_{\bc}$.
The error after projection is denoted by
$\vx_{\bot\cK_{\bc}}=\vx-\vx_{\parallel \cK_{\bc}}$.
To simplify the notation throughout the paper, we will write $\vx_{\parallel_{\bc}}$ to mean $\vx_{\parallel \cK_{\bc}}$ and
write $\vx_{\bot_{\bc}}$ to mean $\vx_{\bot\cK_{\bc}}$.
Let $\cS_{\bc}$ denote the space spanned by the cone $\cK_{\bc}$, or more formally
\begin{align*}
\cS_{\bc} = \bigg\{ \vx\in {\mathbb R}^{n^2}: x_{ij} = \frac{w_i+{\tilde w}_j}{c_{ij}}, \quad w_i, {\tilde w}_j \in {\mathbb R}\bigg\}.
\end{align*}
The projection of $\vx\in {\mathbb R}^{n^2}$ onto the space $\cS_{\bc}$ is denoted by $\vx_{\parallel \cS_{\bc}}$,
with the error after projection denoted by $\vx_{\bot\cS_{\bc}}$.

Now, consider the base family of switch systems under the $\bc$-weighted MaxWeight scheduling algorithm with the maximum possible arrivals in any queue denoted by $A_{\max}$.
Let the variance of the arrival process be such that $\|\vsig^{(\epsilon)}\|^2 \leq \widetilde{\sigma}^2$ for some $\widetilde{\sigma}^2$ that is not dependent on $\epsilon$.
Let $\overline{\bQ}^{(\epsilon)}$ denote the steady state random vector of the QL process for each switch system parameterized by $\epsilon$.
We then have the following proposition.
%
\begin{proposition} \label{prop:SSC}
	For each system
above with $0< \epsilon \leq \numinp$, the steady state QL vector satisfies
	\ba
	\ex \bigg [\| \overline{\bQ}_{\bot_{\bc}} \se \|^r\bigg ] \leq (M_r)^r , \qquad \forall r \in\{1,2,\ldots\},
	\ea
	where $\numinp$  and $M_r $ are functions of $r,\widetilde{\sigma},\vnu,A_{\max},\numin$ but independent of $\epsilon$.
\end{proposition}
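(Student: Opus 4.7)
The plan is to adapt the drift-based state space collapse methodology of \cite{maguluri2016} to the $\bc$-weighted inner product. The key starting observation is that since $\cK_{\bc}$ is a closed convex cone and $\bQ_{\parallel_{\bc}}$ denotes the $\bc$-orthogonal projection onto it, Moreau's decomposition in the Hilbert space $(\R^{n^2},\langle\cdot,\cdot\rangle_{\bc})$ yields $\bQ_{\bot_{\bc}}\in\cK_{\bc}^{\circ}$ (the polar cone of $\cK_{\bc}$ under $\langle\cdot,\cdot\rangle_{\bc}$) together with $\langle\bQ_{\parallel_{\bc}},\bQ_{\bot_{\bc}}\rangle_{\bc}=0$, so that $\|\bQ\|_{\bc}^2 = \|\bQ_{\parallel_{\bc}}\|_{\bc}^2 + \|\bQ_{\bot_{\bc}}\|_{\bc}^2$. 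The task thus reduces to controlling the moments of $\|\bQ_{\bot_{\bc}}^{(\epsilon)}\|_{\bc}$ in steady state uniformly in $\epsilon$.

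Next I would apply non-expansiveness of the $\bc$-projection onto $\cK_{\bc}$ with the feasible test point $\bQ_{\parallel_{\bc}}(t)\in\cK_{\bc}$ to get $\|\bQ_{\bot_{\bc}}(t+1)\|_{\bc}^2\leq \|\bQ_{\bot_{\bc}}(t)+\bA(t)-\bS(t)+\bU(t)\|_{\bc}^2$. Expanding this square, taking conditional expectations, and invoking (i)~$\langle\bQ_{\bot_{\bc}},\bU(t)\rangle_{\bc}\leq0$ (which combines $\langle\bQ,\bU(t)\rangle_{\bc}=0$, since $U_{ij}(t)>0$ forces $Q_{ij}(t)=0$, with $\langle\bQ_{\parallel_{\bc}},\bU(t)\rangle_{\bc}\geq 0$, since elements of $\cK_{\bc}$ are componentwise nonnegative and $\bU(t)\geq\zero$), together with (ii)~the splitting $\langle\bQ,\bs\rangle_{\bc}=\langle\bQ_{\parallel_{\bc}},\bs\rangle_{\bc}+\langle\bQ_{\bot_{\bc}},\bs\rangle_{\bc}$ in which the first summand is a constant $\sum_iw_i+\sum_j\tilde w_j$ for all $\bs\in\cF\supset\cP$ (when $\bQ_{\parallel_{\bc}}=\sum_iw_i\be_{\bc}^{(i)}+\sum_j\tilde w_j\tilde\be_{\bc}^{(j)}$), so that $\bc$-weighted MaxWeight makes $\bS$ a maximizer of $\langle\bQ_{\bot_{\bc}},\bs\rangle_{\bc}$ over $\bs\in\cP$. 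Finally, the crucial geometric lemma to be proved is that, because $\vnu$ lies in the relative interior of $\cF$,
\begin{equation*}
\max_{\bs\in\cP}\langle\bQ_{\bot_{\bc}},\bs\rangle_{\bc}-\langle\bQ_{\bot_{\bc}},\vnu\rangle_{\bc}\;\geq\;\eta\,\|\bQ_{\bot_{\bc}}\|_{\bc}
\end{equation*}
for some $\eta=\eta(\vnu,\bc)>0$; combined with $\langle\bQ_{\bot_{\bc}},\vnu\rangle_{\bc}\leq0$ and Cauchy--Schwarz, this produces the drift inequality
\begin{equation*}
\E\bigl[\|\bQ_{\bot_{\bc}}(t+1)\|_{\bc}^2-\|\bQ_{\bot_{\bc}}(t)\|_{\bc}^2\,\big|\,\bQ(t)=\bQ\bigr]\leq K_1-K_2\,\|\bQ_{\bot_{\bc}}\|_{\bc}
\end{equation*}
for constants $K_1,K_2>0$ independent of $\epsilon\in(0,\numinp]$, with the threshold $\numinp$ arising from the requirement $\epsilon\|\vnu\|_{\bc}\leq\eta/2$.

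With this negative drift and the bounded-increments property $\bigl|\|\bQ_{\bot_{\bc}}(t+1)\|_{\bc}-\|\bQ_{\bot_{\bc}}(t)\|_{\bc}\bigr|\leq\kappa$, where $\kappa$ depends only on $A_{\max}$ and $\bc$, the moment bound lemma of Hajek used in \cite{ErySri12,maguluri2016} yields $\E[\|\bQ_{\bot_{\bc}}^{(\epsilon)}\|_{\bc}^r]\leq(\tilde M_r)^r$ in steady state; passing between $\|\cdot\|_{\bc}$ and $\|\cdot\|$ costs a multiplicative factor depending only on $\min_{ij}c_{ij}$ and $\max_{ij}c_{ij}$, which is absorbed into the final $M_r$. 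The hardest step is the geometric lemma above: the linear lower bound on $\max_{\bs\in\cP}\langle\bQ_{\bot_{\bc}},\bs\rangle_{\bc}-\langle\bQ_{\bot_{\bc}},\vnu\rangle_{\bc}$ is a combinatorial statement about the Birkhoff polytope in the $\bc$-weighted geometry, which requires showing that the extreme points of $\cF$ spread sufficiently far from $\vnu$ along every direction in $\cK_{\bc}^{\circ}\setminus\{\zero\}$; this extends the unweighted argument of \cite{maguluri2016} and is the essential new technical content needed for the $\bc$-weighted generalization.
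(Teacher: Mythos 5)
Your overall architecture is the same as the paper's: establish a uniformly negative conditional drift with almost-surely bounded increments for $W_{\bot_{\bc}}(\bQ)=\norm{\bQ_{\bot_{\bc}}}_{\bc}$ and then invoke the Hajek-type moment bound (Lemma 3 of \cite{maguluri2016}). Your route to the quadratic drift bound via non-expansiveness of the $\bc$-projection, $\norm{\bQ_{\bot_{\bc}}(t+1)}_{\bc}^2\le\norm{\bQ(t+1)-\bQ_{\parallel_{\bc}}(t)}_{\bc}^2$, is a legitimate substitute for the paper's split $\Delta\norm{\bQ_{\bot_{\bc}}}_{\bc}^2=\Delta V-\Delta V_{\parallel_{\bc}}$ (the two are equivalent by the Pythagorean identity), and your observations (i) and (ii) are correct; you still need the standard division by $2\norm{\bQ_{\bot_{\bc}}}_{\bc}$ to convert the quadratic drift into a drift for the norm itself before Hajek applies, which is exactly the content of the paper's inequality \eqref{eqn:drift_dec}.

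The genuine problem is the ``geometric lemma,'' which you correctly identify as the load-bearing step but state in a form that is false. The inequality $\max_{\bs\in\cP}\langle \bx,\bs\rangle_{\bc}-\langle \bx,\vnu\rangle_{\bc}\ge\eta\norm{\bx}_{\bc}$ cannot hold for every $\bx\in\cK_{\bc}^{\circ}\setminus\{\zero\}$: take $\bx=-\be_{\bc}^{(1)}$, which lies in $\cK_{\bc}^{\circ}$ because elements of $\cK_{\bc}$ are componentwise nonnegative; then $\langle \bx,\bs\rangle_{\bc}=-\sum_j s_{1j}=-1$ for every $\bs\in\cP$ and $\langle \bx,\vnu\rangle_{\bc}=-\sum_j\nu_{1j}=-1$, so the left-hand side is $0$ while $\norm{\bx}_{\bc}>0$. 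More generally the left-hand side vanishes on all of $\cS_{\bc}\cap\cK_{\bc}^{\circ}$, since linear functionals coming from $\cS_{\bc}$-directions are constant on $\cF$. The statement you actually need is restricted to $\bx=\bQ_{\bot_{\bc}}$ for a componentwise nonnegative $\bQ$, and that nonnegativity must enter the argument. The paper obtains the restricted version constructively: it exhibits $\br=\vnu+\frac{\nu^c_{\min}}{\norm{\bQ_{\bot_{\bc}}}_{\bc}}\bQ_{\bot_{\bc}}$ with $\nu^c_{\min}=\min_{ij}\nu_{ij}/c_{ij}$, checks that $\br\ge\zero$ and that its weighted row and column sums are at most $1$ (the latter precisely because $\bQ_{\bot_{\bc}}\in\cK_{\bc}^{\circ}$), so $\br$ lies in the capacity region and, for $\bQ\ge\zero$, $\max_{\bs\in\cP}\langle\bQ,\bs\rangle_{\bc}\ge\langle\bQ,\br\rangle_{\bc}=\langle\bQ,\vnu\rangle_{\bc}+\nu^c_{\min}\norm{\bQ_{\bot_{\bc}}}_{\bc}$, which is your lemma in the correct restricted form with the explicit constant $\eta=\nu^c_{\min}$. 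Without this (or an equivalent) construction your sketch has a hole exactly at the step you flagged as hardest.
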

\begin{proof}
Omitting superscript $(\epsilon)$ to simplify the notation and clarify the presentation, 
our general approach consists of defining a Lyapunov function $W_{\bot_{\bc}}(\overline{\bQ}) \eqdef \|\overline{\bQ}_{\bot_{\bc}}\|_{\bc}$
and its drift $\Delta W_{\bot_\bc}(\overline{\bQ}) \eqdef \big(W_{\bot_{\bc}}(\bQ(t+1))-W_{\bot_{\bc}}(\bQ(t))\big)\mathbb I_{\{\bQ(t)=\overline{\bQ}\}}$,
for all $\overline{\bQ} \in {\mathbb R}^{n^2}$.
Then, from Lemma~\ref{lem:Lyapunov-drift} in Appendix~\ref{app:SSC},
there exist positive numbers $\eta$, $\kappa$ and $D$ that depend on $\widetilde{\sigma}$, $\vnu$, $A_{\max}$ and $\numin$, but not on $\epsilon$ such that
\begin{align*}
	\ex[\Delta W_{\bot_{\bc}}(\overline{\bQ}) |\bQ(t) =\overline{\bQ} ] &\le -\eta, \qquad \forall \overline{\bQ} ,\, W_{{\bot}_{\bc}}(\overline{\bQ})\geq \kappa, \\
	\pr[|\Delta W_{\bot_{\bc}}(\overline{\bQ})|\leq D] &= 1, \qquad\quad \forall \overline{\bQ},
\end{align*}
from which we derive, by Lemma 3 in \cite{maguluri2016},
\begin{align*}
	\ex \bigg[\| \overline{\bQ}_{\bot_{\bc}} \se \|^r \bigg] &\leq (2\kappa)^4 + r\!\bigg(\frac{D+\eta}{\eta}\bigg)^r(4D)^r \\
	&\leq (2\kappa)^r + \sqrt{r}e\bigg(4D\frac{r}{e}\frac{D+\eta}{\eta}\bigg)^r\\
	&\leq 2\bigg(\max\bigg\{ 2\kappa,\;(\sqrt{r}e)^{1/r}4D\frac{r}{e}\frac{D+\eta}{\eta} \bigg\}\bigg)^r \\
	&= (M_r)^r={\bigg(2^{1/r}\max\Big\{ 2\kappa,\;(\sqrt{r}e)^{1/r}4D\frac{r}{e}\frac{D+\eta}{\eta} \Big\}\bigg)}^r,
\end{align*}
which is a function of $r$, $\widetilde{\sigma}$, $\vnu$, $A_{\max}$ and $\numin$,
but independent of $\epsilon$,
hence completing the proof.
\end{proof}

\begin{remark}
The special case of $\bc=\vone$ renders the standard MaxWeight algorithm and our results coincide with the state space collapse in \cite{maguluri2016}.
More generally, the capacity region and maximal face $\cF$ are not dependent on the choice of the weight vector $\bc$.
However, for any positive weight vector, the state space collapses into the normal cone of the face $\cF$ w.r.t.\ the dot product defined by the weight vector $\bc$.
This cone depends upon the choice of $\bc$, and thus the choice of the weight vector ``tilts'' the cone of collapse.
\end{remark}

\subsection{Weighted Sum of Queue Lengths in Heavy Traffic}\label{sec:heavy}
We next exploit the above state space collapse result to obtain an exact expression for the heavy traffic scaled weighted sum of QLs in heavy traffic.
Our main results are provided in the following theorem, with the next section providing a general matrix solution approach to calculate the corresponding
limit and obtain an explicit expression for this heavy traffic limit.

\begin{theorem}\label{thm:Heavytraffic}
Consider the base family of
switches
under
the
$\bc$-weighted MaxWeight
algorithm as in Proposition \ref{prop:SSC}.
	Then, in the heavy traffic limit as $\epsilon\downarrow 0$, we have
	\begin{equation}
	\lim_{\epsilon\to 0} \epsilon \ex\bigg[  \sum_{ij} c\lij \overline{Q}\lij^{(\epsilon)} \bigg] = \frac{n}{2}\bigg \langle \vsig^2, \vzeta  \bigg \rangle_{\bc},
	\label{eq:thm:HTL}
	\end{equation}
where $\vsig^2=\bigg(\sigma^2_{ij}\bigg)_{ij}$,
and the vector \vzeta is defined by
$\zeta\lij \eqdef \norm{(\be_{ij})_{||\cS_{\bc}}}_{\bc}^2$
and the matrix $\be_{ij}$ by $1$ in position $(i,j)$ and $0$ elsewhere.
\end{theorem}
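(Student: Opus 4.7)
The plan is to apply the drift method with Lyapunov function $V(\bQ)\eqdef \|\bQ_{\parallel_\bc}\|_\bc^2$. Writing $\bY(t)\eqdef \bA(t)-\bS(t)$, substituting \eqref{eq:dynamics_of_Q-wU}, using linearity of the projection onto $\cS_\bc$, and setting the expected drift to zero in stationarity give
\begin{align*}
0 \;=\; 2\,\ex\!\bigl[\langle \overline{\bQ}_{\parallel_\bc}, \bY+\bU\rangle_\bc\bigr] + \ex\!\bigl[\|(\bY+\bU)_{\parallel_\bc}\|_\bc^2\bigr] .
\end{align*}
The crucial structural observation is that any $\bs\in\cP$ and $\bnu\in\cF$ share the same row and column sums, so $\bs-\bnu\in\cS_\bc^\perp$ (the defining condition of $\cS_\bc^\perp$ with respect to $\langle\cdot,\cdot\rangle_\bc$ is exactly ``row sums and column sums vanish''). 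Consequently $\bS_{\parallel_\bc}=\bnu_{\parallel_\bc}$ deterministically, and parametrizing $(\bQ_{\parallel_\bc})_{ij}=(w_i+\tilde w_j)/c_{ij}$ combined with $\bnu$ being doubly stochastic yields the identity $\langle \bQ_{\parallel_\bc},\bnu\rangle_\bc=\tfrac{1}{n}\sum_{ij}c_{ij}(\bQ_{\parallel_\bc})_{ij}$. Substituting, the drift equation becomes
\begin{align*}
\tfrac{2\epsilon}{n}\,\ex\!\Bigl[\textstyle\sum_{ij}c_{ij}(\overline{\bQ}_{\parallel_\bc})_{ij}\Bigr] \;=\; 2\,\ex\!\bigl[\langle \overline{\bQ}_{\parallel_\bc}, \bU\rangle_\bc\bigr] + \ex\!\bigl[\|(\bY+\bU)_{\parallel_\bc}\|_\bc^2\bigr] .
\end{align*}

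Next I would dispense with each error source using Proposition~\ref{prop:SSC} together with the moment bound $\ex[\|\bU\|_\bc^2]=O(\epsilon)$, the latter following from $\sum_j\ex[U_{ij}]=\epsilon$ (immediate from $\ex[S_{ij}]=(1-\epsilon)\nu_{ij}+\ex[U_{ij}]$ and $\sum_j S_{ij}=1$) together with $U_{ij}\in\{0,1\}$. Proposition~\ref{prop:SSC} bounds $|\ex[\sum c_{ij}\overline{Q}_{ij}^{(\epsilon)}] - \ex[\sum c_{ij}(\overline{\bQ}_{\parallel_\bc})_{ij}]|$ uniformly in $\epsilon$, so the two quantities share the same heavy-traffic limit after multiplication by $\epsilon$. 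The delicate term is $\ex[\langle \overline{\bQ}_{\parallel_\bc}, \bU\rangle_\bc]$: a naive Cauchy--Schwarz fails since $\ex[\|\overline{\bQ}_{\parallel_\bc}\|_\bc^2]=O(\epsilon^{-2})$ while $\ex[\|\bU\|_\bc^2]=O(\epsilon)$, giving a divergent bound $O(\epsilon^{-1/2})$. The decisive trick is to replace $\bQ_{\parallel_\bc}(t)$ by $\bQ_{\parallel_\bc}(t+1)$ at cost $O(\sqrt\epsilon)$ (via boundedness of $\ex[\|(\bY+\bU)_{\parallel_\bc}\|_\bc^2]$) and then exploit the pathwise identity $\langle \bQ(t+1),\bU(t)\rangle_\bc=0$ to rewrite $\langle \bQ_{\parallel_\bc}(t+1), \bU(t)\rangle_\bc=-\langle \bQ_{\bot_\bc}(t+1), \bU(t)\rangle_\bc$; Cauchy--Schwarz against the \emph{orthogonal} component, whose moments are uniformly bounded by Proposition~\ref{prop:SSC}, yields the required $O(\sqrt\epsilon)\to 0$ estimate. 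This is the main technical obstacle.

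The final task is to identify $\lim_{\epsilon\downarrow 0}\ex[\|(\bY+\bU)_{\parallel_\bc}\|_\bc^2]$. Decomposing $\bY=\tilde\bA+(1-\epsilon)\bnu-\bS$ with $\tilde\bA\eqdef \bA-\ex[\bA]$, using $(\bS-\bnu)_{\parallel_\bc}=0$ and $\ex[\tilde\bA]=0$, and discarding terms of order $\epsilon$ or $\sqrt\epsilon$ (including all cross terms involving $\bU$, handled as above) reduce the limit to $\ex[\|\tilde\bA_{\parallel_\bc}\|_\bc^2]$. Expanding $\tilde\bA=\sum_{ij}\tilde A_{ij}\be_{ij}$ and invoking coordinate-wise independence of the arrivals give
\begin{align*}
\ex\!\bigl[\|\tilde\bA_{\parallel_\bc}\|_\bc^2\bigr] \;=\; \textstyle\sum_{ij}\sigma_{ij}^2\,\|(\be_{ij})_{\parallel_\bc}\|_\bc^2 \;=\; \langle \vsig^2, \vzeta\rangle_\bc ,
\end{align*}
which, combined with the $n/2$ prefactor from the drift identity, yields \eqref{eq:thm:HTL}. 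All passages to the limit are justified because the relevant moments are uniformly integrable in $\epsilon$ by Proposition~\ref{prop:SSC}, and matching against Proposition~\ref{prop:ULB} confirms that the limit inferior and superior coincide.
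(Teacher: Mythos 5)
Your argument is correct and follows essentially the same route as the paper's: the same Lyapunov function $\norm{\bQ_{\parallel\cS_{\bc}}}_{\bc}^2$ with drift set to zero in stationarity, the same key facts that $\bS-\bnu\perp_{\bc}\cS_{\bc}$ and $\langle\bQ_{\parallel_{\bc}},\bnu\rangle_{\bc}=\tfrac1n\langle\bQ_{\parallel_{\bc}},\vone\rangle_{\bc}$, the same treatment of the unused-service cross term via $\langle\bQ(t+1),\bU(t)\rangle_{\bc}=0$ plus Cauchy--Schwarz against $\bQ_{\bot_{\bc}}$ using Proposition~\ref{prop:SSC}, and the same identification of the surviving term as the variance of the projected centered arrivals (you isolate $\tilde\bA_{\parallel_{\bc}}$ by linearity where the paper expands in an orthonormal basis of $\cS_{\bc}$, which is the same computation). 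Only a cosmetic note: your final chain $\sum_{ij}\sigma_{ij}^2\norm{(\be_{ij})_{\parallel_{\bc}}}_{\bc}^2=\langle\vsig^2,\vzeta\rangle_{\bc}$ silently inserts the weight $c_{ij}$ in the last step, but the paper's own Lemma~\ref{lem:RHS} makes the identical jump, so you match its proof exactly.
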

\begin{proof}
We again omit the superscript $(\epsilon)$ to simplify the notation and clarify the presentation.
Let $\bA$ denote the arrival vector in steady state, which is distributed identical to the random vector $\bA(t)$ for any
$t$.
Further let $\bS(\overline{\bQ})$ and $\bU(\overline{\bQ})$ denote the steady state schedule and unused service vector, respectively,
both of which depend on the QL vector in steady state $\overline{\bQ}$.
Recalling the queueing dynamics in \eqref{eq:dynamics_of_Q-wU},
define $\overline{\bQ}^+\eqdef \overline{\bQ}+\bA-\bS(\overline{\bQ})+\bU(\overline{\bQ})$ to be the QL vector at time $(t+1)$,
given the QL vector at time $t$ is $\overline{\bQ}$.
Clearly, $\overline{\bQ}^+$ and $\overline{\bQ}$ have the same distribution.

The proof proceeds by setting the drift of the Lyapunov function $V(\bQ)=\norm{\bQ_{||\cS_{\bc}}}_{\bc}^2$ to $0$ in steady state, from which we obtain
\begin{align*}
	0 =& \ex[V(\overline{\bQ}^+)-V(\overline{\bQ}) ] 
	=& \ex[\norm{(\bA-\bS(\bQ))_{||\cS_{\bc}} }_{\bc}^2+ 2\langle \bQ_{||\cS_{\bc}}, (\bA-\bS(\bQ))_{||\cS_{\bc}} \rangle_{\bc} 
	- \norm{\bU_{||\cS_{\bc}}(\bQ)}_{\bc}^2 + 2\langle \bQ^+_{||\cS_{\bc}}, \bU_{||\cS_{\bc}}(\bQ)\rangle_{\bc}].
\end{align*}
This yields an equation of the form
\begin{align*}		
& 2\ex\Big[\big \langle \bQ_{||\cS_{\bc}}, (\bS(\bQ)-\bA)_{||\cS_{\bc}}\big \rangle_{\bc}\Big]
 = \ex\Big[\norm{(\bA-\bS(\bQ))_{||\cS_{\bc}} }_{\bc}^2\Big] \nonumber 
-\ex\Big[\norm{\bU_{||\cS_{\bc}}(\bQ)}_{\bc}^2\Big]
+2\ex\Big[\big \langle\bQ_{||\cS_{\bc}}^+, \bU_{||\cS_{\bc}}(\bQ)\big \rangle_{\bc}\Big].
\end{align*}
The desired result then follows from Lemmas~\ref{lem:LHS} and \ref{lem:RHS} in Appendix~\ref{app:3.1},
matching the LHS and RHS of \eqref{eq:thm:HTL}.
\end{proof}

\subsection{Explicit Expression for Heavy Traffic Limit}
Given the important differences in the cone for our general case of $c_{ij} > 0$ in comparison to the cone in \cite{maguluri2016},
we now seek to obtain an explicit expression for the RHS of \eqref{eq:thm:HTL}.
More specifically, we want to calculate $\zeta_{ij}$ for each $(i,j)$.
To start, let us consider the following $n^2$-dimensional vectors, given in matrix form for any $i, j\in [n-1]$,
\begin{align*}
B_{ij} =
\begin{pmatrix} E_{ij} & -E_i \\ -E^T_j & 1 \end{pmatrix}
\end{align*}
where
$E_{ij}$ is an $(n-1)\times (n-1)$ matrix with the $(i,j)$th element $1$ and all other elements $0$,
$E_i$ is an $(n-1)$-dimensional column vector with the $i$th element $1$ and all other elements $0$,
and superscript $T$ denotes the transpose operator.
These $B_{ij}$ are certainly linearly independent.
At the same time, it can be readily verified that the $\bc$-inner product of $B_{ij}$ with any of $\be_c^{(1)}, \ldots, \be_c^{(n)}, {\tilde \be}_c^{(1)}, \ldots, {\tilde \be}_c^{(n-1)}$ is $0$.
In other words, while $\be_c^{(1)}, \ldots, \be_c^{(n)}, {\tilde \be}_c^{(1)}, \ldots, {\tilde \be}_c^{(n-1)}$ forms an affine basis for the $\parallel_\bc$-space,
$B_{ij}$ forms an affine basis of the $\bot_\bc$-space.  
Recall that $\zeta\lij =\norm{(\be_{ij})_{||\cS_{\bc}}}_{\bc}^2$.
Our approach consists of calculating the projection of $\be_{ij}$ to the $\bot_\bc$-space, which in turn will render its projection to the $\parallel_\bc$-space. 
More specifically, for each $\be_{ij}$, we will find a vector that is in the $\bot_\bc$-space and has the exact same $\bc$-inner product with every $B_{ij}$.
We now carry out these calculations. 

It is clear that $\langle \be_{ij}, B_{k\ell} \rangle_\bc =c_{ij}$ only when $i=k$ and $j=\ell$, and is $0$ otherwise.
Without loss of generality, let us consider $\be_{11}$.
The basic idea is to find the vector $(\be_{11})_{\bot\cS_{\bc}}$ that is perpendicular to all $B_{ij}$ except $B_{11}$, as well as to $\be_c^{(i)}$ and ${\tilde \be}_c^{(j)}$.
From the special structure of $B_{ij}$,
and for some $x_1, x_2,\ldots, x_{n-1}, y_1, y_2, \ldots, y_{n-1}, z$,
it can be readily verified that
\begin{align}
\label{eqn:bot_projection}
(\be_{11})_{\bot\cS_{\bc}}= \left( \begin{array}{ccccc}
1+\frac{-z -x_1-y_1 }{c_{11}}& \frac{-z-x_1-y_2}{c_{12}} & \ldots & \frac{-z-x_1-y_{n-1}}{c_{1,n-1} }& -\frac{x_1}{c_{1n}}\\
\frac{-z -x_2-y_1}{c_{21}}& \frac{-z-x_2-y_2}{c_{22} }& \ldots & \frac{-z-x_2-y_{n-1}}{c_{2, n-1}} & -\frac{x_2}{c_{2n}} \\
\frac{-z -x_3-y_1}{c_{31}} & \frac{-z-x_3-y_2}{c_{32}} & \ldots & \frac{-z-x_3-y_{n-1}}{c_{3,n-1}} & -\frac{x_3}{c_{3n}} \\
& & \ldots & & \\
\frac{-z -x_{n-1}-y_1}{c_{n-1,1}} & \frac{-z-x_{n-1}-y_2}{c_{n-1,2}} & \ldots & \frac{-z-x_{n-1}-y_{n-1}}{c_{n-1,n-1}} & -\frac{x_{n-1}}{c_{n-1,n}} \\
-\frac{y_1}{c_{n1} }&- \frac{y_2}{c_{n2}} & \ldots & -\frac{y_{n-1}}{c_{n,n-1} }& \frac{z}{c_{nn}}
\end{array}\right)
\end{align}
to ensure that $\langle (\be_{11})_{\bot\cS_{\bc}}, B_{11}\rangle_\bc=c_{11}$, and $\langle (\be_{11})_{\bot\cS_{\bc}}, B_{ij}\rangle_\bc=0$ for all other $(i, j)$.
Furthermore, $\langle (\be_{11})_{\bot\cS_{\bc}}, \be_c^{(i)}\rangle_\bc=0$ and $\langle (\be_{11})_{\bot\cS_{\bc}}, {\tilde \be}_c^{(j)}\rangle_\bc=0$ yield
the following $2n-1$ linear equations in terms of $x_1, x_2,\ldots, x_{n-1}, y_1, y_2, \ldots, y_{n-1}, z$:
\begin{align*}
1+\frac{ -z-x_1-y_1 }{c_{11}} + \frac{ -z-x_1-y_2 }{c_{12}} + \ldots + \frac{ -z-x_1-y_{n-1} }{c_{1,n-1}} - \frac{x_1}{c_{1n}}&=0,\\
\frac{ -z-x_2-y_1 }{c_{21}} + \frac{ -z-x_2-y_2 }{c_{22}} + \ldots + \frac{ -z-x_2-y_{n-1} }{c_{2,n-1}} - \frac{x_2}{c_{2n}}&=0,\\
& \hspace*{-1.75in} \cdots \\
\frac{ -z-x_{n-1}-y_1 }{c_{n-1,1}} + \frac{ -z-x_{n-1}-y_2 }{c_{n-1,2}} + \ldots+\frac{ -z-x_{n-1}-y_{n-1} }{c_{n-1,n-1}} -\frac{x_{n-1}}{c_{n-1,n}} &= 0,\\
\frac{y_1}{c_{n1}} + \frac{y_2}{c_{n2}} + \ldots + \frac{y_{n-1}}{c_{n,n-1}} -\frac{z}{c_{nn}} &=0, \\
1+\frac{ -z-x_1-y_1 }{c_{11}} + \frac{ -z-x_2-y_1 }{c_{21}} + \dots+ \frac{ -z-x_{n-1}-y_1 }{c_{n-1,1}} - \frac{y_1}{c_{n1} }&=0, \\
\frac{ -z-x_1-y_2 }{c_{12}} + \frac{ -z-x_2-y_2 }{c_{22}} + \dots+ \frac{ -z-x_{n-1}-y_2 }{c_{n-1,2}} - \frac{y_2}{c_{n2} }&=0, \\
& \hspace*{-1.75in} \cdots \\
\frac{ -z-x_1-y_{n-1} }{c_{1,n-1}} + \frac{ -z-x_2-y_{n-1} }{c_{2, n-1}} + \dots+ \frac{ -z-x_{n-1}-y_{n-1} }{c_{n-1,n-1}} - \frac{y_{n-1}}{c_{n, n-1} }&=0.
\end{align*}

This can be written in matrix form as $G (x_1, x_2,\ldots, x_{n-1}, y_1, y_2, \ldots, y_{n-1}, z)^T= \be^{(2n-1)}_{11}$,
where $G$ is a $(2n-1)\times (2n-1)$ nonsingular matrix representing the coefficients from the above system of equations,
and $\be^{(2n-1)}_{11}$ is a $(2n-1)$-dimensional vector with the first and $(n+1)$st elements being $1$, and all others being $0$.
More specifically, we have the following block matrix expression for the matrix $G$
\begin{align*}
 G= \left( \begin{array}{ccc} D_1 & C &H_1 \\ \zero & H_3^T & -c_{nn}^{-1} \\ C^T & D_2 & H_2 \end{array} \right),
 \end{align*}
with the $(n-1) \times (n-1)$ matrices
$C=\left[\frac{1}{c_{ij}}\right]_{i,j=1,\ldots,n-1}$,
$D_1=\mbox{diag}\left[\sum_{j=1}^{n}\frac{1}{c_{1j}},\sum_{j=1}^{n}\frac{1}{c_{2j}},\ldots,\sum_{j=1}^{n}\frac{1}{c_{n-1,j}}\right]$ and
$D_2=\mbox{diag}\left[\sum_{i=1}^{n}\frac{1}{c_{i1}},\sum_{i=1}^{n}\frac{1}{c_{i2}},\ldots,\sum_{i=1}^{n}\frac{1}{c_{i,n-1}}\right]$,
the $(n-1)$-dimensional column vectors
$H_1 = \left[\sum_{j=1}^{n-1}\frac{1}{c_{1j}}, \ldots, \sum_{j=1}^{n-1}\frac{1}{c_{n-1,j}}\right]^T$,
$H_2 = \left[\sum_{i=1}^{n-1}\frac{1}{c_{i1}}, \ldots, \sum_{i=1}^{n-1}\frac{1}{c_{i,n-1}}\right]^T$ and
$H_3 = \left[\frac{1}{c_{n1}}, \ldots, \frac{1}{c_{n,n-1}}\right]^T$,
and the $(n-1)$-dimensional vector $\zero = [0,\ldots,0]$.
Hence, $(x_1, x_2,\ldots, x_{n-1}, y_1, y_2, \ldots, y_{n-1}, z)^T=  G^{-1} \be^{(2n-1)}_{11}$.
Meanwhile, we know that
\begin{align*}
 \norm{(\be_{11})_{||\cS_{\bc}}}_{\bc}^2&= \langle (\be_{11})_{||\cS_{\bc}}, (\be_{11})_{||\cS_{\bc}}\rangle_{\bc}\\
 &=\langle \be_{11}-(\be_{11})_{\bot\cS_{\bc}}, \be_{11}-(\be_{11})_{\bot\cS_{\bc}}\rangle_{\bc}\\
 &=\langle \be_{11}, \be_{11}\rangle_{\bc}-2 \langle \be_{11}, (\be_{11})_{\bot\cS_{\bc}}\rangle_{\bc}+ \langle (\be_{11})_{\bot\cS_{\bc}}, (\be_{11})_{\bot\cS_{\bc}}\rangle_{\bc}
 \\&= \langle \be_{11}, \be_{11}\rangle_{\bc}-\langle \be_{11}, (\be_{11})_{\bot\cS_{\bc}}\rangle_{\bc} ,
\end{align*}
where the last equality is due to the fact that $\langle \be_{11}, (\be_{11})_{\bot\cS_{\bc}}\rangle_{\bc}= \langle (\be_{11})_{\bot\cS_{\bc}}, (\be_{11})_{\bot\cS_{\bc}}\rangle_{\bc}$.
This together with the above yields
$$\norm{(\be_{11})_{||\cS_{\bc}}}_{\bc}^2=z+x_1+y_1.$$

It is easy to see that the same approach can be taken for all the other $\be_{ij}$.
Specifically, we need to identify the vector $(\be_{ij})_{\bot\cS_{\bc}}$ that satisfies $\langle (\be_{ij})_{\bot\cS_{\bc}}, B_{ij}\rangle_\bc=c_{ij}$ and $\langle (\be_{ij})_{\bot\cS_{\bc}}, B_{k\ell}\rangle_\bc=0$ for all other $(k, \ell)$.
Here, $(\be_{ij})_{\bot\cS_{\bc}}$ can have a similar representation as in \eqref{eqn:bot_projection} with the position of $1$ changed from $(1,1)$ to $(i,j)$.
In addition, the variables $x,y,z$ will satisfy a system of equations $G (x^{ij}_1, x^{ij}_2,\ldots, x^{ij}_{n-1}, y^{ij}_1, y^{ij}_2, \ldots, y^{ij}_{n-1}, z^{ij})^T= \be^{(2n-1)}_{ij}$
where $G$ is the same matrix defined above and where $\be^{(2n-1)}_{ij}$ is a $(2n-1)$-dimensional vector with the $i$th and $(n+j)$th elements being $1$, and all others being $0$ when $j\le n-1$,
and with only the $i$th element being $1$ and all others being $0$ when $j=n$.
Hence, following along the lines of the above approach, we have in general
\begin{equation*}
\norm{(\be_{ij})_{||\cS_{\bc}}}_{\bc}^2 = z^{ij}+x^{ij}_1+y^{ij}_1
 \end{equation*}
with $( x_1^{ij}, \ldots, x_{n-1}^{ij}, y_1^{ij}, \ldots, y_{n-1}^{ij}, z^{ij})^T$ given by $ G^{-1} \be^{(2n-1)}_{ij}$.

\begin{remark}
For the $n=2$ case, the explicit expression above can be used to recover the heavy-traffic limit of
$(1/2)\sum\lij\sigma\lij^2c\lij (1-[c\lij^2/\sum_{i'j'}c_{i'j'}^2] )$ in~\cite{LuMaSq+18} for the RHS of \eqref{eq:thm:HTL}.
\end{remark}

\subsection{Alternative Approach for Explicit Heavy Traffic Limit Expression}

We next present an alternative approach to obtain an explicit expression for the projection under the $\bc$-inner product.
The key will be the following lemma.

\begin{lemma}
Suppose that we have $k\le m$ linearly independent vectors, under the $\bc$-inner product, in ${\mathbb R}^m$: $\bv_1, \bv_2, \ldots, \bv_k$.
For any vector $\bx\in {\mathbb R}^m$, its projection to the subspace spanned by $\bv_1, \bv_2, \ldots, \bv_k$ can be expressed as
\begin{align}
\label{eqn:projection_c}
Z  (Z^T \mbox{diag}(\bc) Z)^{-1} Z^T \mbox{diag}(\bc) \bx,
\end{align}
where $Z$ is an $m\times k$ matrix whose columns are $\bv_i, i=1,\ldots, k$, i.e., $Z$ is the matrix that is ``stacked'' by the basis vectors of the subspace,
and $\mbox{diag}(\bc)$ is the $m\times m$ diagonal matrix with the elements of $\bc$. 
\end{lemma}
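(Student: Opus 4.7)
The plan is to reduce the claim to the classical normal-equations derivation of the orthogonal projection, carried out with the weighted inner product $\langle \cdot, \cdot \rangle_\bc$ in place of the standard Euclidean one. The point is that the weighting enters purely as a positive definite matrix multiplier, so the usual algebra goes through unchanged.

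First, I would rewrite the inner product in matrix form. From the definition $\langle \bu, \bv \rangle_\bc = \sum_{ij} c_{ij} u_{ij} v_{ij}$, we immediately get $\langle \bu, \bv \rangle_\bc = \bu^T \mathrm{diag}(\bc)\, \bv$ for all $\bu, \bv \in \mathbb{R}^m$. This single identity is what turns the problem into a straightforward matrix exercise.

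Next, I would characterize the projection $\bx_{\parallel}$ onto $\mathrm{span}(\bv_1,\ldots,\bv_k)$ by its two defining properties: (i) $\bx_{\parallel} = Z\balpha$ for some coefficient vector $\balpha \in \mathbb{R}^k$; and (ii) the residual $\bx - \bx_{\parallel}$ is $\bc$-orthogonal to every $\bv_i$. Condition (ii) reads $Z^T \mathrm{diag}(\bc)(\bx - Z\balpha) = \bzero$, i.e., the normal equations
\begin{equation*}
Z^T \mathrm{diag}(\bc)\, Z\, \balpha \;=\; Z^T \mathrm{diag}(\bc)\, \bx .
\end{equation*}

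The only thing that really requires justification is the invertibility of $Z^T \mathrm{diag}(\bc) Z$. Since $c_{ij} > 0$ for all $(i,j)$, the matrix $\mathrm{diag}(\bc)$ is strictly positive definite, so $Z^T \mathrm{diag}(\bc) Z$ is positive semidefinite and in fact equals the Gram matrix $\big[\langle \bv_i, \bv_j \rangle_\bc\big]_{i,j}$ of $\bv_1, \ldots, \bv_k$ under the $\bc$-inner product. If $\balpha^T (Z^T \mathrm{diag}(\bc) Z)\balpha = 0$, then $\|Z\balpha\|_\bc = 0$, which forces $Z\balpha = \bzero$, and the $\bc$-linear independence hypothesis forces $\balpha = \bzero$. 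Hence the Gram matrix is positive definite and invertible, so $\balpha = (Z^T \mathrm{diag}(\bc) Z)^{-1} Z^T \mathrm{diag}(\bc)\, \bx$, and substituting back yields exactly formula \eqref{eqn:projection_c}. There is no real obstacle beyond carefully tracking the weight $\mathrm{diag}(\bc)$; the proof is the classical projection derivation with $\bI$ replaced by $\mathrm{diag}(\bc)$.
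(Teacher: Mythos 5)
Your proof is correct and follows essentially the same route as the paper's: write the projection as $Z\bu$, impose $\bc$-orthogonality of the residual to each $\bv_i$, and solve the resulting normal equations $Z^T\mathrm{diag}(\bc)Z\,\bu = Z^T\mathrm{diag}(\bc)\bx$. The only difference is that you explicitly justify the invertibility of the Gram matrix $Z^T\mathrm{diag}(\bc)Z$ via positive definiteness and linear independence, whereas the paper simply asserts it as well-known; your version is slightly more complete.
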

\begin{proof}
We write the projection as $\by= Z \bu $, where $\bu$ is a $k$-dimensional vector.
The projection means that $\langle \bx -  Z\bu, \bv_i \rangle_\bc=0$ for any $i=1,\ldots, k$,
which in matrix form implies
\begin{align*}
Z^T \mbox{diag}(\bc) (\bx- Z\bu)=0.
\end{align*}
Hence,
\begin{align*}
\bu = (Z^T \mbox{diag}(\bc) Z)^{-1} Z^T \mbox{diag}(\bc) \bx,
\end{align*}
and it is well-known that the $k\times k$ matrix $Z^T \mbox{diag}(\bc) Z$ is invertible.
The expression \eqref{eqn:projection_c} then follows immediately.
\end{proof}

Therefore, for our problem, where $m=n^2$ and $k=2n+1$,
suppose that $Z$ is the $n^2\times(2n-1)$ matrix ``stacked'' by $\be_\bc^{(1)}, \ldots, \be_\bc^{(n)}, {\tilde \be}_\bc^{(1)}, \ldots, {\tilde \be}_\bc^{(n-1)}$.
We then have, for any $\be_{ij}$, the following expression for $(\be_{ij})_{||\cS_{\bc}}$:
\begin{align*}
(\be_{ij})_{||\cS_{\bc}}=Z(Z^T \mbox{diag}(\bc) Z)^{-1} Z^T \mbox{diag}(\bc) \be_{ij},
\end{align*}
which renders
\begin{align*}
\norm{(\be_{ij})_{||\cS_{\bc}}}_{\bc}^2& = 
\langle (\be_{ij})_{||\cS_{\bc}}, (\be_{ij})_{||\cS_{\bc}}\rangle_{\bc}= \langle (\be_{ij}), (\be_{ij})_{||\cS_{\bc}}\rangle_{\bc}=\be_{ij}^T\mbox{diag}(\bc) Z(Z^T \mbox{diag}(\bc) Z)^{-1} Z^T \mbox{diag}(\bc) \be_{ij}.
\end{align*}

\section{Conclusions}
%
%
In this paper we considered the optimal control of $n\times n$ IQSs under the $\bc$-weighted MaxWeight algorithm,
with the goal of gaining fundamental insights on the delay-optimal properties of this cost-weighted variant of MaxWeight scheduling in real-world IQSs.
We established theoretical properties that include showing the $\bc$-weighted MaxWeight algorithm exhibits optimal scaling in heavy traffic
under an objective function consisting of a general linear function of the steady-state average QLs.
Our results shed light on the delay optimality of variants of MaxWeight scheduling and
are expected to be of theoretical interest more broadly than IQSs.


\appendix
\section{Additional Proofs for Heavy Traffic Analysis}

\subsection{Proof of State Space Collapse}
\label{app:SSC}
To simplify the notation, we use $\exq[\,\cdot\,]$ to denote $\ex[\,\cdot\,|\bQ(t)=\bQ]$ throughout this section.
\begin{lemma}
For
Lyapunov function drift $\Delta W_{\bot_\bc}(\bQ)$ $\eqdef$ $\big(W_{\bot_{\bc}}(\bQ(t+1))-W_{\bot_{\bc}}(\bQ(t))\big)\mathbb I_{\{\bQ(t)=\bQ\}}$, we have
\begin{align}
	\pr[|\Delta W_{\bot_{\bc}}(\bQ)|\leq D] = 1, &\qquad \forall \bQ , \label{eq:condition_c2}\\
	\exq[\Delta W_{\bot_{\bc}}(\bQ)] \le -\eta, &\qquad \forall \bQ ,
W_{{\bot}_{\bc}}(\bQ)\geq \kappa, \label{eq:condition_c1}
\end{align}
for some positive numbers $\eta$, $\kappa$ and $D$ that depend on $\widetilde{\sigma}$, $\vnu$, $A_{\max}$ and $\numin$, but not on $\epsilon$.
\label{lem:Lyapunov-drift}
\end{lemma}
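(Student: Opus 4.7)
The plan is to verify the two drift conditions \eqref{eq:condition_c2} and \eqref{eq:condition_c1} in turn, adapting the Maguluri--Srikant state-space-collapse template to the $\bc$-inner product and the cone $\cK_\bc$.

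For the bounded-drift condition \eqref{eq:condition_c2}, I would use that metric projection onto the closed convex cone $\cK_\bc$ is nonexpansive in $\|\cdot\|_\bc$, so the reverse triangle inequality gives
\[
|\Delta W_{\bot_\bc}(\bQ)| \le \|(\bQ(t+1)-\bQ(t))_{\bot_\bc}\|_\bc \le \|\bQ(t+1)-\bQ(t)\|_\bc = \|\bA(t)-\bS(t)+\bU(t)\|_\bc,
\]
which is uniformly at most a constant $D$ depending only on $A_{\max}$, $c_{\max}$, and $n$, because $A_{ij}(t)\in[0,A_{\max}]$ and $S_{ij}(t),U_{ij}(t)\in\{0,1\}$.

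The negative-drift condition \eqref{eq:condition_c1} is the substantive step. I would combine the Hajek-type inequality $W_{\bot_\bc}(\bQ^+)-W_{\bot_\bc}(\bQ)\le [W_{\bot_\bc}^2(\bQ^+)-W_{\bot_\bc}^2(\bQ)]/[2W_{\bot_\bc}(\bQ)]$ with the variational fact $W_{\bot_\bc}^2(\bQ^+)\le\|\bQ_{\bot_\bc}+\bA-\bS+\bU\|_\bc^2$, which holds because $\bQ_{\|_\bc}\in\cK_\bc$ is a feasible candidate for the projection of $\bQ^+$ onto $\cK_\bc$. Expanding and taking conditional expectation reduces the task to bounding $\E[\langle\bQ_{\bot_\bc},\bA-\bS+\bU\rangle_\bc\mid\bQ]$ above by a negative multiple of $W_{\bot_\bc}(\bQ)$ plus a bounded constant. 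I would dispatch this cross term in three steps. First, $Q_{ij}U_{ij}\equiv 0$ gives $\langle\bQ,\bU\rangle_\bc=0$; combined with $\bQ_{\|_\bc}\in\cK_\bc\subset\R_+^{n^2}$ and $\bU\ge 0$, this forces $\langle\bQ_{\bot_\bc},\bU\rangle_\bc=-\langle\bQ_{\|_\bc},\bU\rangle_\bc\le 0$, so the $\bU$-contribution may be dropped. Second, every $\bS\in\cP$ and $\bnu\in\cF$ have unit row and column sums, so $\bS-\bnu\in\cS_\bc^\perp$; since $\bQ_{\|_\bc}\in\cK_\bc\subset\cS_\bc$, this gives $\langle\bQ_{\|_\bc},\bS-\bnu\rangle_\bc=0$, allowing $\bS$ to be replaced by $\bnu$ wherever it pairs with $\bQ_{\|_\bc}$. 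Third, because $\bnu$ is in the relative interior of $\cF$, there is a constant $r=r(\bnu,\bc)>0$ with $\bnu+r\bv\in\cF\subset\mathrm{conv}(\cP)$ for every $\bv\in\cS_\bc^\perp$ with $\|\bv\|_\bc\le 1$, and the MaxWeight optimality $\langle\bQ,\bS\rangle_\bc\ge\langle\bQ,\bnu+r\bv\rangle_\bc$ combined with a supremum over such $\bv$ yields $\langle\bQ,\bS-\bnu\rangle_\bc\ge r\|\bQ_{\bot\cS_\bc}\|_\bc$ pathwise.

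The main obstacle is the final geometric step: converting the subspace projection error $\|\bQ_{\bot\cS_\bc}\|_\bc$ produced by the MaxWeight argument into the cone projection error $\|\bQ_{\bot\cK_\bc}\|_\bc = W_{\bot_\bc}(\bQ)$ that actually appears in the Lyapunov function. When the subspace projection already lies in the cone the two errors coincide, but otherwise the cone projection sits on a relative boundary face of $\cK_\bc$ and the cone error can strictly exceed the subspace error. I would close this gap by a polyhedral-compactness argument --- generalizing the corresponding reduction in the unweighted case of Maguluri--Srikant to the $\bc$-inner product --- to produce constants $\alpha\in(0,1]$ and $B<\infty$, depending only on $\bc$ and $n$, such that $\|\bQ_{\bot\cK_\bc}\|_\bc\le\alpha^{-1}\|\bQ_{\bot\cS_\bc}\|_\bc+B$ for every $\bQ\in\R_+^{n^2}$. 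Absorbing $B$ into the additive remainder, choosing $\epsilon\le\numinp:=r\alpha/(2\|\bnu\|_\bc)$ to dominate the perturbative term arising from $\blambda=(1-\epsilon)\bnu$, and dividing through by $2W_{\bot_\bc}(\bQ)$ then yield $\E[\Delta W_{\bot_\bc}(\bQ)\mid\bQ]\le-\eta$ for all $W_{\bot_\bc}(\bQ)\ge\kappa$, with $\eta$ and $\kappa$ independent of $\epsilon$, as required.
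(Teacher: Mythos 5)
Your proposal is correct in outline, and for the bounded-difference condition \eqref{eq:condition_c2} it coincides with the paper's argument (nonexpansiveness of the $\bc$-projection plus the reverse triangle inequality). For the negative-drift condition \eqref{eq:condition_c1} you take a genuinely different route at the decisive step. The paper perturbs the arrival rate in the direction of the \emph{cone} projection error, taking $\br=\bnu+\frac{\nu^c_{\min}}{\norm{\bQ_{\bot_\bc}}_\bc}\bQ_{\bot_\bc}$ and verifying feasibility via the polar-cone property $\langle \bQ_{\bot_\bc},\be^{(i)}_{\bc}\rangle_\bc\le 0$, $\langle \bQ_{\bot_\bc},{\tilde \be}^{(j)}_{\bc}\rangle_\bc\le 0$; MaxWeight optimality then produces the term $-2\nu^c_{\min}\norm{\bQ_{\bot_\bc}}_\bc$ directly, with no need to compare cone and subspace errors. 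You instead perturb only within the $\bc$-orthogonal complement of $\cS_{\bc}$, obtain $-r\norm{\bQ_{\bot\cS_{\bc}}}_\bc$, and must then convert the subspace error into the cone error $W_{\bot_\bc}(\bQ)$. That conversion is the one piece you leave unproven, and it is the only real exposure in your plan. It does hold: one checks that $\cK_{\bc}=\cS_{\bc}\cap\R_+^{n^2}$ (any representation $x_{ij}=(w_i+\tilde w_j)/c_{ij}\ge 0$ can be re-centered so that all $w_i,\tilde w_j\ge 0$), so by Pythagoras $\norm{\bQ_{\bot_\bc}}_\bc^2=\norm{\bQ_{\bot\cS_{\bc}}}_\bc^2+\delta^2$ where $\delta$ is the $\norm{\cdot}_\bc$-distance from $\bQ_{\parallel\cS_{\bc}}$ to $\cK_\bc$, and a Hoffman-type error bound for the polyhedron $\cS_\bc\cap\R_+^{n^2}$ bounds $\delta$ by a constant multiple of the norm of the componentwise negative part of $\bQ_{\parallel\cS_{\bc}}$, which for $\bQ\ge 0$ is at most $\norm{\bQ_{\bot\cS_{\bc}}}_\bc$. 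So your inequality holds even with $B=0$, but you should prove it rather than attribute it to Maguluri--Srikant: their argument (like this paper's) uses the polar-cone trick and never performs this reduction. The remaining differences are cosmetic: your one-shot variational bound $W_{\bot_\bc}^2(\bQ^+)\le\norm{\bQ_{\bot_\bc}+\bA-\bS+\bU}_\bc^2$ is an equivalent, slightly cleaner packaging of the paper's decomposition $\Delta W_{\bot_\bc}\le(\Delta V-\Delta V_{\parallel_\bc})/(2\norm{\bQ_{\bot_\bc}}_\bc)$, and your treatment of the $\bU$ term and of $\langle\bQ_{\parallel_\bc},\bS-\bnu\rangle_\bc=0$ matches the paper's.
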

\begin{proof}
First of all, \eqref{eq:condition_c2} follows from
\begin{align*}
	|\Delta W_{\bot_{\bc}}(\bQ)|&\leq \bigg|\|\bQ_{\bot_{\bc}}(t+1)\|_{\bc}-\|\bQ_{\bot_{\bc}}(t)\|_{\bc}\bigg| \\
	&\leq \|\bQ(t+1)-\bQ(t)\|_{\bc}\\
	&=\sqrt{\sum_{ij}c_{ij}(Q_{ij}(t+1)-Q_{ij}(t))^2} \\
	&\leq \sqrt{\sum_{ij}c_{ij}A_{ij}^2}
	\leq n\sqrt{c_{\max}}\,A_{\max},
\end{align*}
with $D=n\sqrt{c_{\max}}\,A_{\max}$.
To prove~\eqref{eq:condition_c1} we start with a version of Lemma~$4$ in \cite{maguluri2016}, which can be shown to hold more generally for the new dot product
by appropriately adapting the arguments in the proof of Lemma~$7$ in \cite{ErySri12}.
\end{proof}

\begin{lemma} For all $\bQ \in {\mathbb R}^{n^2}$, we have
	\begin{align}
		\label{eqn:drift_dec} 	\Delta W_{\bot_\bc}(\bQ) \le \frac{1}{2||\bQ_{\bot_\bc}||_\bc} (\Delta V(\bQ) - \Delta V_{\parallel_\bc}(\bQ)),
	\end{align}
	where $V(\bQ)\eqdef \norm{\bQ}_{\bc}^2, V_{\parallel_{\bc}}(\bQ)\eqdef \norm{\bQ_{\parallel_{\bc}}}_{\bc}^2$ and
	\begin{align*}
		\Delta V(\bQ) &\eqdef \big(V(\bQ(t+1))-V(\bQ(t))\big)\mathbb I_{\{\bQ(t)=\bQ\}}\\
		\Delta V_{\parallel_{\bc}}(\bQ) & \eqdef
		\big(V_{\parallel_{\bc}}(\bQ(t+1))-V_{\parallel_{\bc}}(\bQ(t))\big)\mathbb I_{\{\bQ(t)=\bQ\}}.
	\end{align*}
\end{lemma}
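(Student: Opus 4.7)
The plan is to reduce the inequality to the elementary scalar fact $a-b\le (a^2-b^2)/(2b)$ for any $a\ge 0$ and $b>0$, combined with a Pythagorean decomposition $V(\bQ) = V_{\parallel_\bc}(\bQ) + W_{\bot_\bc}(\bQ)^2$ of the squared $\bc$-norm under the cone projection onto $\cK_\bc$.

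The first step is to establish this Pythagorean identity. Because $\cK_\bc$ is only a convex cone and not a linear subspace, one cannot take orthogonality for granted and must justify $\langle \bQ_{\parallel_\bc}, \bQ_{\bot_\bc}\rangle_\bc = 0$ directly. I would do this by invoking the variational characterization of projection onto a convex set, $\langle \bQ-\bQ_{\parallel_\bc}, \bY-\bQ_{\parallel_\bc}\rangle_\bc \le 0$ for every $\bY\in\cK_\bc$, twice: once with $\bY = \zero\in\cK_\bc$ and once with $\bY = 2\bQ_{\parallel_\bc}\in\cK_\bc$ (using that the cone is positively homogeneous). The two inequalities pinch $\langle \bQ_{\parallel_\bc}, \bQ_{\bot_\bc}\rangle_\bc$ from both sides to $0$, after which expanding $\|\bQ\|_\bc^2 = \|\bQ_{\parallel_\bc}+\bQ_{\bot_\bc}\|_\bc^2$ gives the desired identity.

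Applying this identity at times $t$ and $t+1$ on the event $\{\bQ(t)=\bQ\}$ and using $W_{\bot_\bc}(\bQ)=\|\bQ_{\bot_\bc}\|_\bc$ yields
\begin{align*}
\Delta V(\bQ) - \Delta V_{\parallel_\bc}(\bQ)
&= W_{\bot_\bc}(\bQ(t+1))^2 - \|\bQ_{\bot_\bc}\|_\bc^2 \\
&= \Delta W_{\bot_\bc}(\bQ)\,\bigl(W_{\bot_\bc}(\bQ(t+1)) + \|\bQ_{\bot_\bc}\|_\bc\bigr).
\end{align*}
Writing $a = W_{\bot_\bc}(\bQ(t+1))\ge 0$ and $b = \|\bQ_{\bot_\bc}\|_\bc>0$, the claimed bound reduces to $a-b \le (a^2-b^2)/(2b)$, which after multiplication by $2b>0$ is equivalent to $(a-b)^2\ge 0$ and therefore always holds. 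Dividing the display above by $2\|\bQ_{\bot_\bc}\|_\bc$ then yields \eqref{eqn:drift_dec}.

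The main subtlety will be the Pythagorean step, since orthogonality is automatic for projection onto a subspace but needs the two-sided variational argument for a cone; once that is in hand the remainder is essentially a one-line arithmetic rearrangement. The degenerate case $\|\bQ_{\bot_\bc}\|_\bc = 0$ makes the stated bound formally ill-defined but causes no difficulty downstream, since this lemma is only invoked in the proof of Lemma~\ref{lem:Lyapunov-drift} to control the drift in the regime $W_{\bot_\bc}(\bQ)\ge\kappa>0$.
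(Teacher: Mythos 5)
Your proof is correct and follows essentially the same route as the source the paper defers to: the paper does not prove this lemma itself but invokes Lemma 4 of \cite{maguluri2016} (via Lemma 7 of \cite{ErySri12}), whose argument is exactly your combination of the scalar inequality $a-b\le (a^2-b^2)/(2b)$ with the Pythagorean identity $\norm{\bQ}_\bc^2=\norm{\bQ_{\parallel_\bc}}_\bc^2+\norm{\bQ_{\bot_\bc}}_\bc^2$ obtained from the Moreau orthogonality of the projection onto the cone $\cK_\bc$ under $\langle\cdot,\cdot\rangle_\bc$. Your two-sided variational argument supplies precisely the detail the paper leaves to ``appropriate adaptation'' (that orthogonality survives the passage from a subspace to a convex cone and to the weighted inner product), and your remark on the degenerate case $\norm{\bQ_{\bot_\bc}}_\bc=0$ is consistent with how the lemma is actually used, namely only on the set $W_{\bot_\bc}(\bQ)\ge\kappa>0$.
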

\begin{proof}
Let us separately consider the two quantities $\Delta V(\bQ)$ and $\Delta V_{\parallel_\bc}(\bQ)$,
recalling the queueing dynamics in \eqref{eq:dynamics_of_Q-wU}.
For the first quantity, we obtain
\begin{align*}
\exq[\Delta V(\bQ)] & = \exq[||\bQ(t)+ \bA(t) -\bS(t)||_\bc^2 - ||\bU(t)||_\bc^2 -||\bQ(t)||_\bc^2] \\
& \le \exq[||\bA(t) -\bS(t)||_\bc^2  + 2\langle \bQ(t) , \bA(t) -\bS(t)\rangle_\bc] \\
&= \exq\bigg[\sum_{ij} c_{ij}A^2_{ij}(t)+ c_{ij}S_{ij}(t) - 2c_{ij}A_{ij}(t)S_{ij}(t) \bigg] + 2 \langle \bQ, \vlam -\exq[\bS(t)] \rangle_\bc \\
& \le \sum_{ij} c_{ij} (\g_{ij} +\sigma_{ij}^2) + \sum_{ij}c_{ij}S_{ij}(t) -2\epsilon \langle \bQ, \bnu \rangle_\bc + 2 \min \langle \bQ, \bnu - \br\rangle_\bc ,
\end{align*}
where we exploit the facts that $\langle \bQ(t+1), \bU(t)\rangle_\bc = 0$ and that arrivals are independent of the QL and service processes in each time slot,
together with our definition of the $\bc$-weighted MaxWeight algorithm.
The selection of $\br$ will be $\bnu + \frac{\nu^c_{\min}}{||\bQ_{\bot_\bc}||_\bc}\bQ_{\bot_\bc}$,
where $\bnu$ is an arrival rate vector that resides on the boundary of the capacity region with all input and output ports saturated
and where $\nu^c_{\min} \eqdef\min \frac{\nu_{ij}}{c_{ij}}$.
This selection of $\br$ guarantees that it is within the capacity region,
which is readily verified by first observing $\nu_{ij} + \frac{\nu^c_{\min}}{||\bQ_{\bot_\bc}||_\bc}\bQ_{\bot_\bc, ij} \ge \nu_{ij} - \nu_{min} \ge 0$
and then observing $\langle \bnu + \frac{\nu^c_{\min}}{||\bQ_{\bot_\bc}||_\bc}\bQ_{\bot_\bc}, \be^{i}\rangle_\bc \le 1$ and
$\langle \bnu + \frac{\nu^c_{\min}}{||\bQ_{\bot_\bc}||_\bc}\bQ_{\bot_\bc}, {\tilde \be}^{j}\rangle_\bc \le 1$.
We therefore have
\begin{align*}
	& \exq[\Delta V(\bQ)]  \le \sum_{ij} c_{ij} (\g_{ij} +\sigma_{ij}^2) + nc_{\max} 
	 -2\epsilon \langle \bQ, \bnu \rangle_\bc - 2\nu^c_{\min}||\bQ_{\bot_\bc}||_\bc ,
\end{align*}
taking advantage of the fact that $\langle \bQ_{\parallel_\bc} , \bQ_{\bot_\bc} \rangle_\bc = 0$.
Turning to the second quantity, we obtain
\begin{align*}
	\exq[\Delta V_{\parallel_\bc}]
=&\exq[ ||\bQ_{\parallel_\bc}(t+1)- \bQ_{\parallel_\bc}(t)||_\bc^2]  
	 + 2\exq[\langle \bQ_{\parallel_\bc}(t), \bQ_{\parallel_\bc}(t+1)-\bQ_{\parallel_\bc}(t)\rangle_\bc] \\
\ge & 2\exq[\langle \bQ_{\parallel_\bc}(t), \bQ_{\parallel_\bc}(t+1)-\bQ_{\parallel_\bc}(t)\rangle_\bc] \\
\ge & 2\exq[\langle \bQ_{\parallel_\bc}(t), \bA(t) -\bS(t) +\bU(t)\rangle_\bc] \\
\ge & 2\langle \bQ_{\parallel_\bc}(t), \vlam \rangle_\bc - 2\exq[\langle \bQ_{\parallel_\bc}(t), \bS(t) \rangle_\bc] \\
=& -2\epsilon\langle \bQ_{\parallel_\bc}(t), \bnu \rangle_\bc - 2\exq[\langle \bQ_{\parallel_\bc}(t), \bS(t)-\bnu \rangle_\bc] \\
=& -2\epsilon\langle \bQ_{\parallel_\bc}(t), \bnu \rangle_\bc ,
\end{align*}
where we again take advantage of the above facts together with $\langle \bQ_{\parallel_\bc}(t) , \bQ_{\bot_\bc}(t+1) \rangle_\bc \leq 0$,
both $\bQ_{\parallel_\bc}$ and $\bU(t)$ being nonnegative componentwise, and properties related to the cone $\calk_{\bc}$ and its spanned space $\cS_{\bc}$.

Upon substituting the above expressions for both quantities into \eqref{eqn:drift_dec}, we have
\begin{align*}
	\exq[\Delta W_{\bot_\bc}(\bQ)]\le & \frac{1}{2|| \bQ_{\bot_\bc}||} \bigg[\sum_{ij} c_{ij} (\g_{ij} +\sigma_{ij}^2) 
+nc_{\max} -2\epsilon \langle \bQ, \bnu \rangle_\bc- 2\nu^c_{\min}||\bQ_{\bot_\bc}||_\bc  + 2\epsilon\langle \bQ_{\parallel_\bc}(t), \bnu \rangle_\bc\bigg]\\
\leq &\frac{\sum_{ij} c_{ij} (\g_{ij} +\sigma_{ij}^2) +nc_{\max} }{|| \bQ_{\bot_\bc}||} - \nu^c_{\min}
- \frac{\epsilon }{|| \bQ_{\bot_\bc}||} \langle \bQ_{\bot_\bc}(t), \bnu \rangle_\bc .
\end{align*}
Given
$\epsilon < \nu^c_{\min}/(2||\bnu||_\bc)$,
then on the set of
$W_{\bot_\bc}(\bQ) \ge 4  (\sum_{ij} c_{ij} (\g_{ij} +\sigma_{ij}^2) + nc_{\max})/\nu^c_{\min}$,
we obtain
\begin{align*}
	\exq[\Delta W_{\bot_\bc}(\bQ)]
\le & \frac{1}{2||\bQ_{\bot_\bc}||} ( \sum_{ij} c_{ij} (\g_{ij} +\sigma_{ij}^2) + nc_{\max} -2\epsilon \langle \bQ, \bnu \rangle_\bc 
	- 2\nu^c_{\min}||\bQ_{\bot_\bc}||_\bc + 2\epsilon\langle \bQ_{\parallel_\bc}(t), \bnu \rangle_\bc) \\
\le& \frac{ \sum_{ij} c_{ij} (\g_{ij} +\sigma_{ij}^2) + nc_{\max}}{2||\bQ_{\bot_\bc}||} - \nu^c_{\min} -  \epsilon ||\bnu||_\bc \\
\le & \frac{ \sum_{ij} c_{ij} (\g_{ij} +\sigma_{ij}^2) + nc_{\max}}{2||\bQ_{\bot_\bc}||} - \frac{\nu^c_{\min}}{2} \le - \frac{\nu^c_{\min}}{4}.
\end{align*}
Hence, \eqref{eq:condition_c1} holds with $\eta = - \nu^c_{\min}/4$.
\end{proof}

\subsection{Proof of Theorem \ref{thm:Heavytraffic}}
\label{app:3.1}
\begin{lemma}
In the limit as $\epsilon\downarrow 0$, we have
\begin{equation*}
n\ex\bigg[\bigg \langle \bQ_{||\cS_{\bc}}, (\bS(\bQ)-\bA)_{||\cS_{\bc}}\bigg \rangle_{\bc}\bigg] =
	\lim_{\epsilon\to 0} \epsilon \ex\bigg[  \sum_{ij} c\lij \overline{Q}\lij^{(\epsilon)} \bigg] .
\end{equation*}
\label{lem:LHS}
\end{lemma}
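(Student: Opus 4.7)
The plan has three main steps: a geometric reduction that eliminates the schedule variable, an independence argument that extracts the factor $\epsilon$, and a linear identity on $\cS_{\bc}$ combined with the state space collapse of Proposition~\ref{prop:SSC}.

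First, I would observe that $\bS(\bQ)$ is a permutation matrix and therefore $\bS(\bQ)\in\cF$, while $\bnu\in\cF$ by assumption. A direct calculation shows $\langle \bS(\bQ)-\bnu,\be_{\bc}^{(i)}\rangle_{\bc}=\sum_j(S_{ij}(\bQ)-\nu_{ij})=0$ and similarly for ${\tilde \be}_{\bc}^{(j)}$, so $\bS(\bQ)-\bnu\in\cS_{\bc}^{\perp}$ under the $\bc$-inner product; equivalently $(\bS(\bQ)-\bnu)_{||\cS_{\bc}}=\bzero$. Since $\bQ_{||\cS_{\bc}}\in\cS_{\bc}$ annihilates the $\bot\cS_{\bc}$ component of $\bnu-\bA$, we obtain
\begin{align*}
\langle \bQ_{||\cS_{\bc}},(\bS(\bQ)-\bA)_{||\cS_{\bc}}\rangle_{\bc} = \langle \bQ_{||\cS_{\bc}},\bnu-\bA\rangle_{\bc}.
\end{align*}

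Second, in stationarity $\bA=\bA(t)$ is independent of $\bQ=\bQ(t)$ because of the i.i.d.\ arrival structure, and $\ex[\bA]=(1-\epsilon)\bnu$, so
\begin{align*}
\ex\bigl[\langle \bQ_{||\cS_{\bc}},(\bS(\bQ)-\bA)_{||\cS_{\bc}}\rangle_{\bc}\bigr] = \epsilon\,\ex\bigl[\langle \bQ_{||\cS_{\bc}},\bnu\rangle_{\bc}\bigr].
\end{align*}

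Third, I would use the following linear identity on $\cS_{\bc}$: for any $\bq\in\cS_{\bc}$ with $q_{ij}=(w_i+\tilde w_j)/c_{ij}$, double stochasticity of $\bnu$ gives
\begin{align*}
n\langle \bq,\bnu\rangle_{\bc} = n\sum_{ij}(w_i+\tilde w_j)\nu_{ij} = n\sum_i w_i + n\sum_j \tilde w_j = \sum_{ij}c_{ij}q_{ij}.
\end{align*}
Applying this to $\bq=\bQ_{||\cS_{\bc}}$ and decomposing $\bQ=\bQ_{||\cS_{\bc}}+\bQ_{\bot\cS_{\bc}}$, we obtain
\begin{align*}
\sum_{ij}c_{ij}Q_{ij} - n\langle \bQ_{||\cS_{\bc}},\bnu\rangle_{\bc} = \sum_{ij}c_{ij}(Q_{\bot\cS_{\bc}})_{ij}.
\end{align*}
Cauchy--Schwarz bounds the right-hand side in absolute value by $\|\bQ_{\bot\cS_{\bc}}\|_{\bc}\sqrt{\sum_{ij}c_{ij}}$; since $\calk_{\bc}\subset\cS_{\bc}$ gives $\|\bQ_{\bot\cS_{\bc}}\|_{\bc}\le\|\bQ_{\bot_{\bc}}\|_{\bc}$, Proposition~\ref{prop:SSC} with $r=1$ renders the expectation uniformly bounded in $\epsilon$. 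Multiplying through by $\epsilon$ kills this error as $\epsilon\downarrow 0$, and combining with the second step yields the claimed equality of heavy-traffic limits.

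The main obstacle I anticipate is identifying the linear identity $n\langle \bq,\bnu\rangle_{\bc}=\sum_{ij}c_{ij}q_{ij}$ on $\cS_{\bc}$, which crucially uses the doubly-stochastic structure of $\bnu\in\cF$, and keeping straight the two projections: onto the cone $\calk_{\bc}$ versus onto the spanning subspace $\cS_{\bc}$. Once that identity is in hand, the rest reduces to a Cauchy--Schwarz bound together with Proposition~\ref{prop:SSC}.
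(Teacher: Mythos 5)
Your proposal is correct and follows essentially the same route as the paper: isolate the $\epsilon\bnu$ contribution via independence of $\bA$ from $\bQ$, kill the $\bS(\bQ)-\bnu$ term by orthogonality of differences of elements of $\cF$ to $\cS_{\bc}$, and absorb the $\bQ_{\bot\cS_{\bc}}$ residual using Proposition~\ref{prop:SSC}. Your linear identity $n\langle \bq,\bnu\rangle_{\bc}=\sum_{ij}c_{ij}q_{ij}$ on $\cS_{\bc}$ is exactly the paper's observation that $\vone/n\in\cF$ so that $\bnu$ may be replaced by $\vone/n$ against $\bQ_{||\cS_{\bc}}$.
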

\begin{proof}
The LHS of
above
can be written as
\ba
2\ex\bigg[\bigg \langle \bQ_{||\cS_{\bc}}, (\bS(\bQ)-\bA)_{||\cS_{\bc}}\bigg \rangle_{\bc}\bigg]
&= 2\epsilon \bE\bigg[\bigg \langle \bQ_{||\cS_{\bc}}, \vnu \bigg \rangle_{\bc} \bigg]+2\bE\bigg[\bigg \langle \bQ_{||\cS_{\bc}}, \bS (\bQ)-\vnu \bigg \rangle_{\bc}\bigg]\\
&= \frac{2}{n}\epsilon \bE\bigg[\bigg \langle_{\bc} \bQ_{||\cS_{\bc}}, \vone \bigg \rangle_{\bc} \bigg]+2\epsilon \bE\bigg[\bigg \langle \bQ_{||\cS_{\bc}}, \vnu-\frac{1}{n}\vone \bigg \rangle_{\bc} \bigg]\\
&=\frac{2}{n}\epsilon \bE\bigg[\bigg \langle \bQ, \vone \bigg \rangle_{\bc} \bigg]-\frac{2}{n}\epsilon \bE\bigg[\bigg \langle \bQ_{\bot\cS_{\bc}}, \vone \bigg \rangle_{\bc} \bigg],
\ea
where the second equality follows from the fact that $\bS(\overline{\bQ}), \vnu \in \cF$,
and therefore $\bS(\overline{\bQ})-\vnu$ is orthogonal to the space spanned by the normal vectors of $\cF$, i.e., to the space $\cS_{\bc}$;
and the next to last equality follows from the fact that $\vnu, \vone/n \in \cF$.
Since the second term of the last equation goes to $0$ as $\epsilon\downarrow 0$ by the state space collapse from Proposition \ref{prop:SSC}, we have
\begin{equation*} 
	\lim_{\epsilon\downarrow 0} \ex\bigg[\Big \langle \bQ_{||\cS_{\bc}}, (\bS(\bQ)-\bvA)_{||\cS_{\bc}}\Big \rangle_{\bc}\bigg]
	\!=\!\lim_{\epsilon\downarrow 0}\!\frac{\epsilon}{n}\ex\bigg[  \sum_{ij} c\lij \overline{Q}\lij \bigg],
\end{equation*}
thus yielding the LHS of
\eqref{eq:thm:HTL} in Theorem~\ref{thm:Heavytraffic}.
\end{proof}

\begin{lemma}
In the limit as $\epsilon\downarrow 0$, we have
\begin{align}		
 \ex\bigg[\norm{(\bA-\bS(\bQ))_{||\cS_{\bc}} }_{\bc}^2\bigg] -\ex\bigg[\norm{\bU_{||\cS_{\bc}}(\bQ)}_{\bc}^2\bigg] 
&  +2\ex\bigg[\bigg \langle\bQ_{||\cS_{\bc}}^+, \bU_{||\cS_{\bc}}(\bQ)\bigg \rangle_{\bc}\bigg] = \frac{n}{2}\bigg \langle \vsig^2, \vzeta  \bigg \rangle_{\bc}.
\label{eq:RHS}
\end{align}
\label{lem:RHS}
\end{lemma}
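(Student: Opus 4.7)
The plan is to treat the three terms on the LHS of \eqref{eq:RHS} separately as $\epsilon \downarrow 0$, expecting only the first to contribute nontrivially while the other two vanish; I continue to suppress the superscript $(\epsilon)$ as in the theorem's proof.

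For the first term $\ex[\norm{(\bA - \bS(\bQ))_{||\cS_\bc}}_\bc^2]$, the key geometric observation is that every schedule $\bS \in \cP$ and the saturation vector $\vnu$ both lie on the affine face $\cF$, whose tangent subspace is the $\bc$-orthogonal complement of $\cS_\bc$; consequently $\bS - \vnu \perp_\bc \cS_\bc$, so $(\bA - \bS)_{||\cS_\bc} = (\bA - \vnu)_{||\cS_\bc}$ and the schedule drops out entirely. Writing $\bA - \vnu = (\bA - \vlam) - \epsilon \vnu$ and expanding the squared $\bc$-norm, the cross term $\ex[\langle (\bA - \vlam)_{||\cS_\bc}, \vnu_{||\cS_\bc}\rangle_\bc]$ vanishes by $\ex[\bA - \vlam] = \bzero$, yielding
\begin{equation*}
\ex\bigl[\norm{(\bA - \bS)_{||\cS_\bc}}_\bc^2\bigr] \;=\; \ex\bigl[\norm{(\bA - \vlam)_{||\cS_\bc}}_\bc^2\bigr] + \epsilon^2 \norm{\vnu_{||\cS_\bc}}_\bc^2,
\end{equation*}
with the $\epsilon^2$ remainder negligible. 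For the main piece I would invoke the explicit $\bc$-projection $P = Z(Z^T \mbox{diag}(\bc) Z)^{-1} Z^T \mbox{diag}(\bc)$ from Section 3.4 (or equivalently a $\bc$-orthonormal basis of $\cS_\bc$), use componentwise independence of $\bA - \vlam$ with variances $\sigma_{ij}^2$ to reduce the expression to the diagonal sum $\sum_{ij} c_{ij} \sigma_{ij}^2 P_{ij,ij}$, and then apply the identity $\zeta_{ij} = c_{ij} P_{ij,ij}$ (which is immediate from $\norm{(\be_{ij})_{||\cS_\bc}}_\bc^2 = \langle \be_{ij}, P\be_{ij}\rangle_\bc$) to cast the limit in the form $\frac{n}{2}\langle \vsig^2, \vzeta\rangle_\bc$.

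For the second term $\ex[\norm{\bU_{||\cS_\bc}(\bQ)}_\bc^2]$, I exploit that the unused service is small in heavy traffic: taking expectation in \eqref{eq:dynamics_of_Q-wU} in steady state and summing over $j$ gives $\sum_j \ex[U_{ij}] = 1 - (1-\epsilon) = \epsilon$, so together with $U_{ij} \in \{0,1\}$ we have $\ex[\norm{\bU}_\bc^2] \leq c_{\max}\sum_{ij}\ex[U_{ij}] = O(\epsilon)$, and since $\bc$-orthogonal projection is a contraction, $\ex[\norm{\bU_{||\cS_\bc}}_\bc^2] = O(\epsilon) \to 0$. For the third term, the complementary slackness $Q^+_{ij} U_{ij} = 0$ forces $\langle \bQ^+, \bU\rangle_\bc = 0$, and decomposing into parallel and orthogonal components (with the cross $\bc$-products vanishing by $\bc$-orthogonality) yields $\langle \bQ^+_{||\cS_\bc}, \bU_{||\cS_\bc}\rangle_\bc = -\langle \bQ^+_{\bot\cS_\bc}, \bU_{\bot\cS_\bc}\rangle_\bc$; Cauchy--Schwarz combined with Proposition \ref{prop:SSC} (which bounds $\ex[\norm{\bQ^+_{\bot\cS_\bc}}_\bc^2]$ uniformly in $\epsilon$) and the just-derived $O(\epsilon)$ bound on $\ex[\norm{\bU_{\bot\cS_\bc}}_\bc^2]$ then produces an $O(\sqrt{\epsilon})$ bound that vanishes in the limit.

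The hard part will be the explicit covariance calculation in Step~1: one must handle the non-standard $\bc$-weighted adjoint relation $P^T \mbox{diag}(\bc) = \mbox{diag}(\bc) P$ (rather than $P^T = P$) and correctly recover the dimension-driven factor $n/2$ arising from the $(2n-1)$-dimensional structure of $\cS_\bc$ inside $\R^{n^2}$. All other steps reduce to fairly standard drift-method manipulations once the complementary-slackness identity $\langle \bQ^+, \bU\rangle_\bc = 0$ and the state space collapse of Proposition \ref{prop:SSC} have been invoked.
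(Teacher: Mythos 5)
Your proposal is correct and follows essentially the same architecture as the paper's proof: the second and third terms are dispatched exactly as in the paper (the steady-state drift of $\sum_{ij}Q_{ij}$ gives $\ex[\sum_{ij}U_{ij}(\overline\bQ)]=n\epsilon$, which with $U_{ij}\in\{0,1\}$ and contractivity of the $\bc$-projection kills the second term, while complementary slackness $\langle\overline\bQ^+,\bU\rangle_\bc=0$ plus Cauchy--Schwarz against the uniform moment bound of Proposition~\ref{prop:SSC} kills the third). For the first term you take a mildly different but equivalent route: the paper expands $(\bA-\bS(\bQ))_{||\cS_\bc}$ in a $\bc$-orthonormal basis $f_\ell$ of $\cS_\bc$ with $f_{\ell ij}=(v_{\ell i}+\tilde v_{\ell j})/c_{ij}$ and computes the resulting variance directly (the schedule drops out there because $\langle\bs,f_\ell\rangle_\bc=\sum_i v_{\ell i}+\sum_j\tilde v_{\ell j}$ is the same for every $\bs\in\cP$), whereas you eliminate $\bS$ up front via $\bS-\vnu\perp_\bc\cS_\bc$ and then work with the explicit projector $P$; this is the same diagonal-covariance computation in matrix form and makes the disappearance of the schedule randomness more transparent. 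One caution: do not expect to ``recover'' a factor $n/2$ inside this covariance calculation. The first term evaluates exactly to $\sum_{ij}\sigma_{ij}^2\,\norm{(\be_{ij})_{||\cS_{\bc}}}_{\bc}^2=\sum_{ij}\sigma_{ij}^2\zeta_{ij}$ with no such factor (the paper's own proof of this lemma likewise produces none); the $n/2$ in \eqref{eq:thm:HTL} arises only when this lemma is combined with Lemma~\ref{lem:LHS}, whose statement carries the factor $n$ against the $2$ from the drift identity. Relatedly, your packaging of the answer as $\langle\vsig^2,\vzeta\rangle_{\bc}=\sum_{ij}c_{ij}\sigma_{ij}^2\zeta_{ij}$ inserts an extra weight $c_{ij}$ relative to what your own identity $\zeta_{ij}=c_{ij}P_{ij,ij}$ delivers, namely $\sum_{ij}c_{ij}\sigma_{ij}^2P_{ij,ij}=\sum_{ij}\sigma_{ij}^2\zeta_{ij}$; be explicit about which inner product (weighted or unweighted) you use in the final expression.
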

\begin{proof}
First of all, from the equation \begin{align*}
0&=\ex\bigg[\sum_{i,j}Q_{ij}(t+1) - \sum_{ij}Q(t) | \bQ(t)= \overline{\bQ}\bigg]=\ex\bigg[\sum_{i,j}A_{ij}-\sum_{i,j}S_{i,j}(\overline{\bQ})-\sum_{i,j} U_{i,j}\bigg],
\end{align*} we can conclude that $\ex[\sum_{i,j} U_{ij}(\bQ)] = n\epsilon$,
which implies that the second term on the LHS of \eqref{eq:RHS} converges to $0$ as $\epsilon\downarrow 0$:
\begin{align*}
	\ex\bigg[\norm{\bU_{||\cS_{\bc}}(\bQ)}_{\bc}^2 \bigg] &
	\!\!\leq\! \ex\bigg[\sum_{i,j} c_{ij} U_{ij}(\overline{\bQ})^2 \bigg]
	\!\!=\!\ex\bigg[\sum_{i,j} c_{ij} U_{ij}(\overline{\bQ})\bigg]
	\leq c_{\max}n\epsilon\to 0,\quad\textrm{as $\epsilon\downarrow 0$}.
\end{align*}

For the third term on the LHS of \eqref{eq:RHS}, we have
\begin{align*}
	2\ex\bigg[\Big \langle\bQ_{||\cS_{\bc}}^+, \bU_{||\cS_{\bc}}(\bQ)\Big \rangle_{\bc}\bigg] & = 2\ex\bigg[\Big \langle\bQ^+, \bU_{||\cS_{\bc}}(\bQ)\Big \rangle_{\bc}\bigg]
	-2\ex\bigg[\Big \langle\bQ_{\bot\cS_{\bc}}^+, \bU_{||\cS_{\bc}}(\bQ)\Big \rangle_{\bc}\bigg] 
	&= -2\ex\bigg[\Big \langle\bQ_{\bot\cS_{\bc}}^+, \bU_{||\cS_{\bc}}(\bQ)\Big \rangle_{\bc}\bigg],
\end{align*}
because
$\overline{Q}^+_{ij}=0$ if $U_{ij}(\overline{Q})=1$.  Furthermore,
\begin{align*}
	\bigg|\ex\bigg[\bigg \langle\bQ_{||\cS_{\bc}}^+, \bU_{||\cS_{\bc}}(\bQ)\bigg \rangle_{\bc}\bigg]\bigg| 
	& \leq \sqrt{\ex\bigg[ \norm{\bQ_{\bot\cS_{\bc}}^+}^2\bigg]\ex\bigg[ \norm{\bU_{||\cS_{\bc}}(\bQ)}^2 \bigg]}\leq M_2 \sqrt{\ex\bigg[ \norm{\bU_{||\cS_{\bc}}(\bQ)}^2 \bigg]} \leq M_2 \sqrt{2n\epsilon},
\end{align*}
where the first inequality is just Cauchy-Schwartz,
$M_2$ is the constant in Proposition~\ref{prop:SSC}, and the last inequality is due to
$\ex[\sum_{i,j} U_{ij}(\bQ)] = n\epsilon$.
This then implies that the third term also converges to $0$ as $\epsilon\downarrow 0$.

Finally, turning to investigate the first term, let $f_1, f_2, \ldots, f_{2n-1}$ be an orthonormal base for space $\cS$.
Then, from basic properties of the space, there exist $v_{\ell, i}$ and ${\tilde v}_{\ell, j}$ such that $f_{\ell i j} = \frac{v_{\ell i}+ {\tilde v}_{\ell j}}{c_{ij}}$.
Thus, we can derive
\begin{align*}
\ex[||(\bA-\bS(\bQ))_{||\cS_{\bc}} ||^2] &=   \sum_{\ell=1}^{2n-1} \E\left[\langle \bA-\bS(\bQ), f_\ell \rangle_{\bc}^2  \right]
=  \sum_{\ell=1}^{2n-1} \E \left[\bigg( \sum_{ij}(A_{ij}-S_{ij}) \bigg(\frac{v_{\ell i}+ {\tilde v}_{\ell j}}{c_{ij}}\bigg)c_{ij}\bigg)^2\right]\\
	&= \sum_{\ell =1}^{2n-1} \var \bigg[\sum_i v_{\ell i} \sum_j A_{ij}+ \sum_j {\tilde v}_{\ell j}\sum_i A_{ij} \bigg]\\
	&=  \sum_{\ell =1}^{2n-1} \bigg[ \sum_i v_{\ell i}^2 \sum_j \sigma_{ij}^2 + \sum_j {\tilde v}^2_{ij} \sum_j \sigma_{ij}^2 + 2\sum_{ij} v_{\ell i}{\tilde v}_{\ell j}\sigma_{ij}^2\bigg] \\
	&= \sum_{ij} c_{ij} \sigma_{ij} \sum_{\ell=1}^{2n-1} \bigg(\frac{v_{\ell i}+ {\tilde v}_{\ell j}}{c_{ij}}\bigg)^2 c_{ij}
	=\sum_{ij} c_{ij} \sigma_{ij} \sum_{\ell=1}^{2n-1} \langle f_{\ell}, e_{ij}\rangle_{\bc}^2\\
	&= \sum_{ij} c_{ij} \sigma_{ij} || (e_{ij})_{\parallel_{\cS_{\bc}}} ||^2 =\bigg \langle \vsig^2, \vzeta  \bigg \rangle_{\bc},
\end{align*}
which establishes the desired result.
\end{proof}

\bibliographystyle{abbrv}

%


\end{document}